\newtheorem{thm}{Theorem}[section]
\newtheorem{lemma}[thm]{Lemma}
\newtheorem{cor}[thm]{Corollary}
\title{Anti-forcing numbers of perfect matchings of graphs}
\author{Hongchuan Lei$^a$, Yeong-Nan Yeh$^a$, Heping Zhang$^b$}
\date{\small $^a$Institute of Mathematics, Academia Sinica, Taipei 10617, Taiwan,\\ E-mail: mayeh@math.sinica.edu.tw\\
\vskip 2mm
$^b$School of Mathematics and Statistics, Lanzhou
University, Lanzhou,  Gansu 730000, China, E-mail:  zhanghp@lzu.edu.cn}
\begin{document}

\maketitle
\begin{abstract}
We  define the anti-forcing number of a perfect matching $M$ of a graph $G$ as the minimal number of edges of $G$ whose deletion results in a subgraph with a unique perfect matching $M$, denoted by $af(G,M)$. The anti-forcing number of a graph proposed by Vuki\v{c}evi\'{c} and Trinajsti\'c in Kekul\'e structures of molecular graphs is in fact the minimum anti-forcing number of perfect matchings.
For plane bipartite graph $G$ with a perfect matching $M$, we obtain a minimax result: $af(G,M)$ equals the maximal number of  $M$-alternating cycles of $G$ where any two either are disjoint or  intersect only at edges in $M$. For a hexagonal system $H$, we show that the maximum anti-forcing number of $H$ equals the Fries number of $H$. As a consequence, we have that the Fries number of $H$ is between the Clar number of $H$ and twice. Further, some extremal graphs are discussed.

\vskip 2mm
\noindent{\bf Keywords:}  Graph; Hexagonal system; Perfect matching; Forcing number; Anti-forcing number; Fries number.

\end{abstract}

\section{Introduction}

We only consider finite and simple graphs.  Let $G$ be a graph with vertex set $V(G)$ and edge set $E(G)$. A perfect matching or 1-factor $M$ of a graph $G$ is a set of edges of $G$ such that  each vertex of $G$ is incident with exactly one edge in $M$.

A Kekul\'e structure of some molecular graph (for example, benzenoid and fullerene) coincides with a perfect matching of a graph. Randi\'c and Klein \cite{Randic,Klein} proposed the {\em innate degree of freedom} of a Kekul\'e structure, i.e. the least number of double bonds can determine this entire Kekule structure, nowadays it is called the  forcing number by Harary et al. \cite{Harary}.

A {\em forcing set} $S$ of a perfect matching
$M$ of $G$ is a subset of $M$ such that
$S$ is contained in no other perfect matchings of $G$.
The {\em forcing number} of $M$ is the smallest cardinality over all forcing sets of $M$, denoted by $f(G,M)$.
An edge of $G$ is called a {\em forcing edge} if it is contained in exactly one perfect matching of $G$.
The {\em minimum} (resp. {\em maximum}) \emph{forcing number} of $G$ is the minimum (resp. maximum) value of forcing numbers
of all perfect matchings of $G$, denoted by $f(G)$ (resp. $F(G)$). In general to compute the minimum forcing number of a graph with the maximum degree 3 is an NP-complete problem \cite{Afshani}.

Let $M$ be a perfect matching of a graph $G$.
A cycle $C$ of $G$ is called an {\em $M$-alternating cycle}
if the edges of $C$ appear alternately in $M$ and $E(G)\setminus M$.

\begin{lemma}\cite{Adams,Riddle}\label{forcing} A subset $S\subseteq M$ is a forcing set of $M$ if and only if each $M$-alternating cycle of $G$ contains at least one edge of $S$.
\end{lemma}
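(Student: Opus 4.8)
The plan is to prove both directions by exploiting the fundamental fact that two distinct perfect matchings of $G$ differ on a disjoint union of (even) alternating cycles. I would first record this fact: if $M$ and $M'$ are distinct perfect matchings, then the symmetric difference $M\triangle M'$ induces a subgraph in which every vertex has degree $0$ or $2$, hence it is a vertex-disjoint union of cycles, and each such cycle is $M$-alternating (and also $M'$-alternating). This is the only structural input needed.

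For the contrapositive of the ``only if'' direction: suppose $S\subseteq M$ misses some $M$-alternating cycle $C$, i.e. $S\cap E(C)=\emptyset$. Then $M':=M\triangle E(C)$ is a perfect matching of $G$ distinct from $M$ (it swaps the two color classes of edges along $C$), and since $S$ consists of edges of $M$ lying outside $C$, every edge of $S$ is unaffected, so $S\subseteq M'$. Hence $S$ is contained in another perfect matching and is not a forcing set. Conversely, for the ``if'' direction, suppose $S$ is not a forcing set, so there is a perfect matching $M'\neq M$ with $S\subseteq M'$. Then $S\subseteq M\cap M'$, so $S$ is disjoint from $M\triangle M'$. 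Since $M\triangle M'$ is nonempty, it contains at least one cycle $C$, which is $M$-alternating, and $E(C)\cap S=\emptyset$. This exhibits an $M$-alternating cycle avoiding $S$, completing the contrapositive.

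I do not anticipate a genuine obstacle here; the lemma is essentially a restatement of the symmetric-difference principle for matchings, and the proof is a short two-way argument. The only point requiring a little care is checking that $M\triangle E(C)$ is genuinely a perfect matching (each vertex of $C$ stays covered, vertices off $C$ are untouched) and that it is genuinely different from $M$ (it differs precisely on the nonempty edge set $E(C)$), together with the symmetric observation that an alternating cycle has the same number of $M$-edges as non-$M$-edges so the swap is well defined. I would state these one-line verifications explicitly and then conclude.
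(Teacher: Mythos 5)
Your proof is correct: both directions follow cleanly from the symmetric-difference principle, and the verifications you flag (that $M\oplus E(C)$ is a perfect matching distinct from $M$, and that $M\oplus M'$ decomposes into $M$-alternating cycles disjoint from $S\subseteq M\cap M'$) are exactly the right points to check. The paper itself only cites this lemma without proof, but your argument is the standard one and mirrors the symmetric-difference argument the paper does give for the analogous anti-forcing characterization (Lemma \ref{anti}), so there is nothing to correct.
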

For planar bipartite graphs, Pachter and Kim  obtained the following minimax theorem by using Lucchesi and Younger's result in digraphs \cite{LY}.

\begin{thm}\label{cycle}{\rm\cite{Pachter}}
Let $M$ be a perfect matching in a planar bipartite graph $G$. Then $f(G,M)=c(M)$,
where $c(M)$ is the maximum number of disjoint $M$-alternating cycles of $G$.
\end{thm}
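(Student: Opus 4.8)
The plan is to prove $f(G,M)\ge c(M)$ and $f(G,M)\le c(M)$ separately, after one structural remark. By Lemma~\ref{forcing}, $f(G,M)$ is precisely the least size of a set $S\subseteq M$ meeting every $M$-alternating cycle, so a forcing set is the same thing as a ``cover'' of all $M$-alternating cycles by edges of $M$. The structural remark is: \emph{if two $M$-alternating cycles of $G$ share a vertex $v$, then both contain the unique edge of $M$ at $v$} --- an $M$-alternating cycle through $v$ uses exactly one $M$-edge there, and $v$ lies on only one edge of $M$. Hence two $M$-alternating cycles with no common edge of $M$ have no common vertex at all; in particular $c(M)$ equals the largest number of $M$-alternating cycles that are pairwise disjoint in their $M$-edges.

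For $f(G,M)\ge c(M)$: if $C_1,\dots,C_k$ are pairwise disjoint $M$-alternating cycles and $S$ is a forcing set, then $S$ meets each $C_i$ (else $M\triangle C_i$ would be another perfect matching containing $S$), and by disjointness the edges picked from the $C_i$ are distinct, so $|S|\ge k$; taking $k=c(M)$ gives the bound.

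For $f(G,M)\le c(M)$, equivalently $c(M)\ge f(G,M)$, I would follow Pachter and Kim and combine planar duality with the Lucchesi--Younger theorem. Let $(B,W)$ be the bipartition and orient each edge of $M$ from $W$ to $B$ and each edge of $E(G)\setminus M$ from $B$ to $W$, obtaining a plane digraph $D$; checking a single vertex (one $M$-arc, all other arcs of the opposite in/out type) shows that the directed cycles of $D$ are exactly the $M$-alternating cycles of $G$, each arising from one of its two orientations. In the plane dual $D^{\ast}$ the directed cycles of $D$ become the directed bonds, and more generally directed cuts of $D^{\ast}$ dualize to edge-disjoint unions of directed cycles of $D$. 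Weight an arc of $D^{\ast}$ by $1$ if it is dual to an $M$-edge and by $|M|+1$ otherwise. Since every $M$-alternating cycle contains an $M$-edge, every directed cut of $D^{\ast}$ contains a weight-$1$ arc, so the set of all weight-$1$ arcs is a dijoin of weight $|M|$; hence a minimum-weight dijoin avoids the heavy arcs and corresponds to a smallest $S\subseteq M$ covering all $M$-alternating cycles, that is, one of size $f(G,M)$. By the weighted Lucchesi--Younger theorem \cite{LY} there is then a list of $f(G,M)$ directed cuts of $D^{\ast}$ in which every weight-$1$ arc occurs at most once; dualizing and splitting each of these (balanced) subdigraphs of $D$ into directed cycles yields at least $f(G,M)$ $M$-alternating cycles of $G$ that pairwise share no $M$-edge, hence --- by the structural remark --- are pairwise disjoint. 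Therefore $c(M)\ge f(G,M)$, and combined with the first inequality $f(G,M)=c(M)$.

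The main obstacle, and where I would take the most care, lies entirely in this last paragraph: verifying that the bipartite orientation identifies $M$-alternating cycles with directed cycles, that planar duality sends them to coherently oriented cuts, that the choice of weights forces a minimum dijoin into $M$, and that dualizing the packing of directed cuts and decomposing into directed cycles neither loses cycles nor produces two that share an $M$-edge. Granting the structural remark, the two inequalities themselves are short.
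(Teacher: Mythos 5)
Your argument is correct, but a direct comparison with ``the paper's proof'' is not quite possible: the paper only quotes this theorem from Pachter and Kim, and the closest thing it proves is the anti-forcing analogue, Theorem~\ref{minimax}. Both that proof and yours begin with the same orientation $\vec G(M)$ (under which directed cycles are exactly the $M$-alternating cycles) and both rest on Lucchesi--Younger; the difference is how the side constraint on the cover is enforced. For anti-forcing the cover must lie in $E(G)\setminus M$, and the paper handles this by contracting every $M$-edge, after which feedback sets of the contracted planar digraph are precisely anti-forcing sets. For forcing the cover must lie in $M$, and you handle this by passing to the planar dual and giving non-$M$ arcs weight $|M|+1$ so that a minimum dijoin is forced into $M$; this is sound, your structural remark correctly converts ``pairwise sharing no $M$-edge'' into ``pairwise disjoint,'' and the weighted Lucchesi--Younger theorem you invoke follows from the unweighted one by subdividing arcs (which preserves planarity). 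The price of your route is the extra bookkeeping of dual orientations, directed cuts versus bonds, and the weighted min--max. It is worth noting that the form of Lucchesi--Younger already stated in the paper (Theorem~\ref{LY}: minimum feedback set equals maximum number of arc-disjoint directed cycles in a planar digraph) applies to $\vec G(M)$ itself with no dualization and no weights, because in $\vec G(M)$ every black vertex has a unique in-arc and every white vertex a unique out-arc, namely its $M$-arc. This single observation shows both that arc-disjoint directed cycles are automatically vertex-disjoint and that any feedback set using a non-$M$ arc $(b,w)$ can be modified to use instead the $M$-arc entering $b$ without losing the feedback property; hence minimum feedback set $=f(G,M)$ and maximum arc-disjoint collection $=c(M)$, and Theorem~\ref{LY} finishes the proof in two lines.
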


A hexagonal system (or benzenoid) is a 2-connected finite plane graph such that
every interior face is a regular hexagon of side length one. It can also
 be formed by a cycle with its interior in the infinite hexagonal lattice on the plane (graphene). A hexagonal system with a  perfect matching is viewed as the carbon-skeleton of a benzenoid hydrocarbon.

Let $H$ be a  hexagonal system with a perfect matching $M$.  A set of {\em disjoint}  $M$-alternating hexagons of $H$ is called an $M$-{\em resonant set}.  A set of  $M$-alternating hexagons of $H$ (the intersection is allowed) is called an $M$-{\em alternating set}.  A  maximum resonant set of $H$ over all perfect matchings is a {\em Clar structure} or {\em Clar set},  and its size is the {\em Clar number} of $H$, denoted by $cl(H)$ (cf. \cite{Hansen2}). A Fries set of $H$ is a maximum alternating set of $H$ over all perfect matchings and the Fries number of $H$, denoted by $Fries(H)$, is  the size of a Fries set of $H$. Both Clar number and Fries number can measure the stability  of polycyclic benzenoid hydrocarbons \cite{Clar, Abeledo}.

\begin{thm}\cite{Xu}\label{clar}
Let $H$ be a  hexagonal system. Then $F(H)=cl(H)$.
\end{thm}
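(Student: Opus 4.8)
The plan is to prove the two inequalities $cl(H)\le F(H)$ and $F(H)\le cl(H)$ separately, using Theorem~\ref{cycle} to translate forcing numbers into disjoint alternating cycles (a hexagonal system is a plane bipartite graph). For $cl(H)\le F(H)$ I would simply take a perfect matching $M_0$ carrying a Clar set: its $cl(H)$ pairwise disjoint $M_0$-alternating hexagons are in particular $cl(H)$ pairwise disjoint $M_0$-alternating cycles, so $c(M_0)\ge cl(H)$, and Theorem~\ref{cycle} gives $F(H)\ge f(H,M_0)=c(M_0)\ge cl(H)$.

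For $F(H)\le cl(H)$, fix $M$ with $f(H,M)=F(H)$; Theorem~\ref{cycle} supplies $n=F(H)$ pairwise disjoint $M$-alternating cycles $C_1,\dots,C_n$, and it suffices to find one perfect matching $M^{*}$ of $H$ admitting $n$ pairwise disjoint $M^{*}$-alternating hexagons, for then $cl(H)\ge n=F(H)$. I would build $M^{*}$ region by region. Let $C_{i_1},\dots,C_{i_t}$ be those $C_i$ maximal under interior-containment; their closed interiors $H_j:=\overline{\mathrm{int}(C_{i_j})}$ are pairwise vertex-disjoint hexagonal subsystems of $H$, each remaining $C_i$ sits inside exactly one $H_j$, and if $n_j$ of $C_1,\dots,C_n$ lie in $H_j$ then $\sum_j n_j=n$. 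Since $C_{i_j}$ is $M$-alternating, every vertex on it is $M$-matched along it, so $M\cap E(H_j)$ is a perfect matching $M_j$ of $H_j$ whose perimeter is $M_j$-alternating and the $n_j$ cycles inside $H_j$ are $n_j$ disjoint $M_j$-alternating cycles; thus $c(M_j)\ge n_j$. Granting the Key Lemma below, pick in each $H_j$ a perfect matching $N_j$ agreeing with $M_j$ on $E(\partial H_j)$ with $n_j$ pairwise disjoint $N_j$-alternating hexagons, and set $M^{*}:=\bigl(\bigcup_j N_j\bigr)\cup\bigl(M\setminus\bigcup_j E(H_j)\bigr)$. Because the $H_j$ are vertex-disjoint and each $N_j$ saturates $\partial H_j$ exactly as $M$ does, $M^{*}$ is a perfect matching of $H$, and each $N_j$-alternating hexagon inside $H_j$ stays $M^{*}$-alternating since $M^{*}$ agrees with $N_j$ there; collecting these gives the required $n$ pairwise disjoint $M^{*}$-alternating hexagons.

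\smallskip
\noindent\textbf{Key Lemma (the crux).} \emph{If $H'$ is a hexagonal system with a perfect matching $M'$ such that $\partial H'$ is $M'$-alternating, then $H'$ has a perfect matching $N$ with $N\cap E(\partial H')=M'\cap E(\partial H')$ and with $c(M')$ pairwise disjoint $N$-alternating hexagons.}

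\smallskip
I would attack this by induction on the number of hexagons, the single-hexagon case being immediate. The structural observation is that, $\partial H'$ being $M'$-alternating, every perimeter vertex is saturated by a perimeter edge, so $M'$ decomposes as $A\sqcup B$ with $A$ a perfect matching of the cycle $\partial H'$ and $B$ a perfect matching of the graph $H'_{\mathrm{int}}$ induced by the interior vertices of $H'$ (a union of smaller hexagonal systems); moreover every $M'$-alternating cycle avoiding $\partial H'$ lies inside $H'_{\mathrm{int}}$, and every hexagon of $H'_{\mathrm{int}}$ is again a hexagon of $H'$. Applying the already-proven, smaller instance of $F\le cl$ to $H'_{\mathrm{int}}$ gives a perfect matching $B^{*}$ of $H'_{\mathrm{int}}$ with $c_{H'_{\mathrm{int}}}(B)$ pairwise disjoint alternating hexagons; swapping $B$ for $B^{*}$ inside $M'$ keeps $\partial H'$ alternating and keeps those hexagons alternating, after which it remains to produce $c(M')-c_{H'_{\mathrm{int}}}(B)$ more pairwise disjoint alternating hexagons touching the outer strip $H'\setminus H'_{\mathrm{int}}$.

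\smallskip
\noindent\textbf{Where the difficulty lies.} The hard part is exactly this last step: one needs the finer fact that a hexagonal system with an $M$-alternating perimeter always contains an $M$-alternating hexagon meeting the perimeter, and that these can be selected disjoint from any prescribed disjoint family of alternating hexagons lying strictly inside. This is the sort of statement established via the structure theory of $M$-alternating cycles (and Clar covers) of hexagonal systems; with it in hand the induction and the region-by-region assembly above close up, the remaining checks---vertex-disjointness of the $H_j$, that $M^{*}$ is a perfect matching, that alternation is inherited by subsystems---being routine.
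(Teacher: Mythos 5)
Note first that the paper does not prove this statement at all: it is quoted from Xu, Bian and Zhang \cite{Xu}, so there is no internal proof to compare against. Judged on its own merits, your first direction ($cl(H)\le F(H)$) is correct and complete: a Clar set for some $M_0$ is a family of $cl(H)$ disjoint $M_0$-alternating cycles, so Theorem~\ref{cycle} gives $F(H)\ge f(H,M_0)=c(M_0)\ge cl(H)$. The second direction, however, is not a proof. Everything you do before the Key Lemma is a legitimate (and correctly justified) reduction --- the maximal cycles have disjoint closed interiors, $M$ restricts to a perfect matching of each region, and any local re-matching patches back globally (indeed, since no $M$-edge leaves a region, you do not even need $N_j$ to agree with $M_j$ on $\partial H_j$) --- but the entire difficulty of the theorem is concentrated in the Key Lemma, and there you stop at exactly the hard point. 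The statement you would need (``a hexagonal system with an $M$-alternating perimeter contains an $M$-alternating hexagon meeting the perimeter, selectable disjointly from any prescribed family of disjoint alternating hexagons strictly inside'') is neither proved nor obviously true; it is at least as strong as what you are trying to establish, and invoking ``the structure theory of $M$-alternating cycles'' is not a proof. There is also a concrete error in the inductive set-up: the subgraph induced by the interior vertices of a hexagonal system with alternating perimeter need not be a union of hexagonal systems (for pyrene it is a single edge), so ``apply the smaller instance of $F\le cl$ to $H'_{\mathrm{int}}$'' is not licensed by your induction hypothesis.

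For contrast, the standard route (the one in \cite{Xu}, and the one this paper itself mimics for the Fries number in Theorem~\ref{fries}) avoids your Key Lemma entirely: among all perfect matchings $M$ and all maximum families of disjoint $M$-alternating cycles, choose one minimizing the total number of hexagons enclosed by members of the family, and show by a local analysis of the boundary structure (the Zheng--Chen corner decomposition) that in such an extremal family every cycle must already be a hexagon --- any non-hexagon member could be replaced, possibly after taking a symmetric difference that changes the matching, by a configuration with strictly smaller $h$-index or strictly larger cardinality. The freedom to modify the matching and to discard the outer cycle $C_{i_j}$ itself, rather than insisting on keeping it and its perimeter matching fixed as you do, is what makes that argument close. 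As it stands, your proposal establishes one inequality and reduces the other to an unproved claim that carries the full weight of the theorem, so it has a genuine gap.
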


In this paper we consider the anti-forcing number of a graph, which was previously defined by Vuki\v{c}evi\'{c} and Trinajsti\'c \cite{VT2,VT} as the smallest number of edges whose removal results in a subgraph with a single perfect matching (see refs \cite{Che,Dh07,Dh08,LI, ZBV} for some researches on this topic).  By an analogous manner as the forcing number we  define the anti-forcing number, denoted by $af(G,M)$,  of a perfect matching $M$ of a graph $G$ as the minimal number of edges not in $M$ whose removal to fix a single perfect matching $M$ of $G$.  We can see that the anti-forcing number of a graph $G$ is the minimum anti-forcing number of all perfect matchings of $G$.  We also show that the anti-forcing number has a close relation with forcing number: For any perfect matching $M$ of $G$, $f(G,M)\leq af(G,M)\leq (\Delta-1)f(G,M)$, where $\Delta$ denotes the maximum degree of $G$.
For plane bipartite graph $G$,  we obtain a minimax result: For any perfect matching $M$ of $G$, the anti-forcing number of  $M$  equals  the maximal number of  $M$-alternating cycles of $G$ any two members of which intersect only at edges in $M$. 
For a hexagonal system $H$, we show that the maximum anti-forcing number of $H$ equals the Fries number of $H$. As a consequence, we have that the Fries number of $H$ is between the Clar number of $H$ and twice. Discussions for some extremal graphs about the anti-forcing numbers show the anti-forcing number of a graph $G$ with the maximum degree three can achieve the minimum forcing number or twice.

\section{Anti-forcing number of perfect matchings}

An anti-forcing set $S$ of a graph $G$ is a set of edges of $G$ such that $G-S$ has a unique perfect matching. The smallest cardinality of  anti-forcing sets of $G$ is called the {\em anti-forcing number} of $G$ and denoted by $af(G)$.

Given a perfect matching $M$ of a graph $G$. If $C$ is an $M$-alternating cycle of $G$, then the symmetric difference $M\oplus C$ is another perfect matching of $G$.  Here  $C$ may be  viewed as its edge-set. A subset $S\subseteq E(G)\setminus M$ is called an anti-forcing set of $M$ if $G-S$ has a unique perfect matching, that is, $M$.

\begin{lemma}\label{anti}A set $S$ of edges of $G$ not in $M$ is an anti-forcing set of $M$ if and only if $S$ contains at least one edge of every $M$-alternating cycle of $G$.
\end{lemma}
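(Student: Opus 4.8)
The plan is to prove both directions of the equivalence directly, using the basic fact that for a perfect matching $M$ of $G$, any other perfect matching $M'$ differs from $M$ precisely in a disjoint union of $M$-alternating cycles, namely the components of the symmetric difference $M\oplus M'$.

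For the forward direction, suppose $S\subseteq E(G)\setminus M$ is an anti-forcing set of $M$, so $G-S$ has $M$ as its unique perfect matching. I would argue by contraposition: if some $M$-alternating cycle $C$ of $G$ contains no edge of $S$, then all edges of $C$ survive in $G-S$, so $C$ is still an $M$-alternating cycle of $G-S$, and hence $M\oplus C$ is a perfect matching of $G-S$ different from $M$ (it is different because $C$ is nonempty and $M\oplus C$ disagrees with $M$ on the edges of $C$). This contradicts uniqueness, so every $M$-alternating cycle must meet $S$.

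For the backward direction, suppose $S$ contains at least one edge of every $M$-alternating cycle of $G$. Note first that $M\cap S=\emptyset$ by hypothesis, so $M$ is still a perfect matching of $G-S$. Suppose for contradiction that $G-S$ has another perfect matching $M'\neq M$. Then $M\oplus M'$ is nonempty and decomposes into $M$-alternating cycles of $G-S$, each of which is also an $M$-alternating cycle of $G$; pick any one such cycle $C$. By hypothesis $C$ contains an edge $e\in S$. But $e\in C\subseteq M\cup M'$ and $e\notin M$ forces $e\in M'\subseteq E(G-S)$, contradicting $e\in S$. Hence $M$ is the unique perfect matching of $G-S$, i.e. $S$ is an anti-forcing set of $M$.

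This argument is essentially a mirror image of the proof of Lemma~\ref{forcing} (the Adams--Riddle characterization of forcing sets), with edges outside $M$ playing the role that edges inside $M$ play there. There is no real obstacle; the only point requiring a little care is the standard structural fact that $M\oplus M'$ is a disjoint union of $M$-alternating (equivalently $M'$-alternating) cycles, and the observation that deleting edges not in $M$ leaves $M$ intact so that "$M$ is a perfect matching of $G-S$" is automatic and one only has to rule out competitors.
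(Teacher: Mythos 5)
Your proof is correct and follows essentially the same route as the paper: the forward direction uses $M\oplus C$ to contradict uniqueness, and the converse uses the symmetric difference with a hypothetical second perfect matching. You merely spell out the decomposition of $M\oplus M'$ into $M$-alternating cycles, which the paper's converse leaves implicit, so the two arguments coincide in substance.
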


\begin{proof}If $S$ is an anti-forcing set of $M$, then $G-S$ has a unique perfect matching, i.e. $M$. So $G-S$ has no $M$-alternating cycles. Otherwise, if $G-S$ has an $M$-alternating cycle $C$, then the symmetric difference $M\oplus C$ is another perfect matching of $G-S$ different from $M$, a contradiction. Hence each $M$-alternating cycle of $G$ contains at least one edge of $S$.  Conversely, suppose that $S$ contains at least one edge of every $M$-alternating cycle of $G$. That is, $G-S$ has no $M$-alternating cycles, so $G-S$ has a unique perfect matching.
\end{proof}

The smallest cardinality of anti-forcing sets of $M$ is called the anti-forcing number of $M$ and denoted by $af(G,M)$. So we have the following relations between the forcing number and anti-forcing number.

\begin{thm}\label{bound}Let $G$ be a graph with the maximum degree $\Delta$. For any perfect matching $M$ of $G$, we have $$f(G,M)\leq af(G,M)\leq (\Delta-1)f(G,M).$$
\end{thm}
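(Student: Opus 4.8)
The plan is to prove the two inequalities separately, using the two combinatorial characterizations already at hand: Lemma~\ref{forcing} for forcing sets and Lemma~\ref{anti} for anti-forcing sets. Both lemmas say the same thing with $M$-alternating cycles as the ``hitting targets''; the difference is only \emph{which} edges of such a cycle we are allowed to use — a forcing set picks edges in $M$, an anti-forcing set picks edges not in $M$. So the whole argument is a translation between hitting a family of cycles by their $M$-edges and hitting it by their non-$M$-edges.

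For the lower bound $f(G,M)\le af(G,M)$: let $S\subseteq E(G)\setminus M$ be a minimum anti-forcing set of $M$, so $|S|=af(G,M)$. Each edge $e=uv\in S$ lies in some $M$-alternating cycle; I want to replace $e$ by an $M$-edge so as to build a forcing set of size at most $|S|$. The clean way is: for each $e=uv\in S$, let $e'$ be the (unique) edge of $M$ incident with $u$, and set $S'=\{\,e' : e\in S\,\}$. Then $|S'|\le|S|$, and I claim $S'$ is a forcing set of $M$. Indeed, take any $M$-alternating cycle $C$ of $G$; by Lemma~\ref{anti}, $C$ contains some $e=uv\in S$. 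Since $C$ is $M$-alternating and $e\notin M$, the two edges of $C$ meeting $e$ at its endpoints lie in $M$; in particular the $M$-edge at $u$ is on $C$, and that edge is exactly $e'\in S'$. Hence every $M$-alternating cycle meets $S'$, so by Lemma~\ref{forcing} $S'$ is a forcing set, giving $f(G,M)\le|S'|\le|S|=af(G,M)$.

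For the upper bound $af(G,M)\le(\Delta-1)f(G,M)$: let $T\subseteq M$ be a minimum forcing set, $|T|=f(G,M)$. For each edge $e=uv\in T$, let $N(e)$ denote the set of all edges of $G$ incident with $u$ or $v$ other than $e$ itself; since $\deg(u),\deg(v)\le\Delta$ and $e$ is counted at both ends, $|N(e)|\le 2(\Delta-1)$ — but a sharper and sufficient choice is to take only the edges at \emph{one} endpoint, say $N'(e)=\{f\in E(G)\setminus\{e\}: f \text{ is incident with } u\}$, so $|N'(e)|\le\Delta-1$, and all such $f$ are non-$M$ edges because $e\in M$ already covers $u$. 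Put $S=\bigcup_{e\in T}N'(e)$; then $|S|\le(\Delta-1)|T|=(\Delta-1)f(G,M)$ and $S\subseteq E(G)\setminus M$. To see $S$ is an anti-forcing set, let $C$ be any $M$-alternating cycle; by Lemma~\ref{forcing} $C$ contains some $e=uv\in T$. On $C$ the vertex $u$ has degree two and one of its two incident edges on $C$ is $e$; the other, call it $f$, is an edge of $C$, is incident with $u$, and is not $M$ (alternation forces $f\notin M$). Hence $f\in N'(e)\subseteq S$, so $C$ meets $S$. By Lemma~\ref{anti}, $S$ is an anti-forcing set of $M$, which yields $af(G,M)\le|S|\le(\Delta-1)f(G,M)$.

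I expect no serious obstacle here; the only point requiring a little care is making sure the edges we throw into $S$ (resp.\ $S'$) are genuinely of the required type (not in $M$, resp.\ in $M$) and that each alternating cycle really is hit. The $M$-alternation of $C$ around a designated vertex of the chosen edge $e$ is exactly what guarantees this, and picking a single endpoint of $e$ rather than both is what keeps the multiplicative constant at $\Delta-1$ rather than $2(\Delta-1)$. One should also note that $f(G,M)\ge 1$ whenever $G$ has an $M$-alternating cycle (and both sides are $0$ otherwise), so the bounds are nonvacuous and consistent in the degenerate case.
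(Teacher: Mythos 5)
Your proposal is correct and takes essentially the same route as the paper: for the lower bound you replace each edge of the anti-forcing set by the $M$-edge at one of its endpoints, and for the upper bound you take all non-$M$ edges at one endpoint of each forcing edge, exactly the constructions used in the paper. The only cosmetic difference is that you verify the anti-forcing property directly from the cycle-hitting characterization of Lemma~\ref{anti}, whereas the paper argues by contradiction via pendant edges in $G-F'$; both verifications are sound.
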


\begin{proof}Given any anti-forcing set $S$ of $M$. For each edge $e$ in $S$, let $e_1$ and $e_2$ be the edges in $M$ adjacent to $e$. All such edges $e$ in $S$ are replaced with one of $e_1$ and $e_2$ to get another set $S'$ of edges in $M$. It is obvious that $|S'|\leq |S|$. Further we claim that $S'$ is a forcing set of $M$. For any $M$-alternating cycle $C$ of $G$, by Lemma \ref{anti} $C$ must contain an edge $e$ in $S$. Then $C$ must pass through both $e_1$ and $e_2$. By the definition for $S'$, $C$ contains at least one edge of $S'$. So Lemma \ref{forcing} implies that $S'$ is a forcing set of $M$. Hence the claim holds. So  $f(G,M)\leq |S'|\leq |S|$, and the first inequality is proved.

Now we consider the second inequality. Let $F$ be a minimum forcing set of $M$. Then $f(G,M)=|F|$. For each edge $e$ in $F$, we choose all the edges not in $M$ incident with one end of $e$. All such edges form a set $F'$ of size no larger than $(\Delta-1)|F|$, which is disjoint with $M$. We claim that $F'$ is an anti-forcing set of $M$. Otherwise, Lemma \ref{anti} implies that $G-F'$ contains an $M$-alternating cycle $C$. Since each edge in $F$ is a pendant edge of $G-F'$, $C$ does not pass through an edge of $F$. This contradicts that $F$ is a forcing set of $M$ by Lemma \ref{forcing}. Hence $af(G,M)\leq |F'|\leq (\Delta-1)|F|$.
\end{proof}

\begin{lemma}$af(G)=\min\{af(G,M): M \mbox{ is a perfect matching of } G\}$.
\end{lemma}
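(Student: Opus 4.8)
The plan is to prove the displayed identity by establishing the two inequalities between $af(G)$ and $m:=\min\{af(G,M): M \text{ is a perfect matching of }G\}$ separately, the first being routine and the second requiring a short exchange argument.

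For the inequality $af(G)\le m$, I would argue directly from the definitions. Fix any perfect matching $M$ of $G$ and any anti-forcing set $S$ of $M$; by definition $S\subseteq E(G)\setminus M$ and $G-S$ has the unique perfect matching $M$. In particular $G-S$ has a unique perfect matching, so $S$ is an anti-forcing set of $G$ and $af(G)\le |S|$. Minimizing over all anti-forcing sets $S$ of $M$ gives $af(G)\le af(G,M)$, and then minimizing over all perfect matchings $M$ gives $af(G)\le m$.

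For the reverse inequality $m\le af(G)$, I would take a minimum anti-forcing set $S$ of $G$, so $|S|=af(G)$, and let $M$ be the unique perfect matching of $G-S$. The point is to show $S\cap M=\emptyset$, after which $S$ qualifies as an anti-forcing set of $M$ and hence $m\le af(G,M)\le |S|=af(G)$. Suppose for contradiction that some $e\in S\cap M$, and set $H:=G-(S\setminus\{e\})$, so that $E(H)=E(G-S)\cup\{e\}$ and $M$ is a perfect matching of $H$. I will use the standard fact (implicit in the proof of Lemma~\ref{anti}) that a graph with a perfect matching $M$ has $M$ as its \emph{unique} perfect matching if and only if it has no $M$-alternating cycle. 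If $H$ had a perfect matching other than $M$, it would contain an $M$-alternating cycle $C$; since $G-S$ has the unique perfect matching $M$, $C$ cannot lie in $G-S$, so $C$ uses the edge $e$. As $e\in M\cap C$, the symmetric difference $M\oplus C$ is a perfect matching of $H$ that avoids $e$, hence a perfect matching of $H-e=G-S$ different from $M$ — contradicting uniqueness in $G-S$. Therefore $H$ has the unique perfect matching $M$, i.e. $S\setminus\{e\}$ is an anti-forcing set of $G$, contradicting the minimality of $S$. This forces $S\cap M=\emptyset$, completing the argument; combining the two inequalities yields $af(G)=m$.

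The only genuine obstacle is the second inequality, specifically the a priori possibility that a minimum anti-forcing set of $G$ contains edges of the perfect matching it leaves behind; the alternating-cycle/symmetric-difference argument above shows any such edge is redundant, so it can be assumed away. Everything else is bookkeeping with the definitions.
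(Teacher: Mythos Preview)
Your proof is correct and, like the paper, amounts to checking the two inequalities directly from the definitions; the paper simply declares the result ``immediate''. However, the ``genuine obstacle'' you identify in the second inequality is illusory: since $M$ is defined as the unique perfect matching of $G-S$, every edge of $M$ lies in $E(G-S)=E(G)\setminus S$, so $S\cap M=\emptyset$ holds automatically. Your alternating-cycle exchange argument is therefore unnecessary---$S$ is already an anti-forcing set of $M$ by definition, yielding $m\le af(G,M)\le |S|=af(G)$ in one line.
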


By the definitions the above result is immediate.  Hence we may say, $af(G)$ is the {\em minimum anti-forcing number} of $G$. Whereas, $$Af(G):=\max\{af(G,M): M \mbox{ is a perfect matching of } G\}$$ is  the {\em maximum anti-forcing number} of $G$.

The following is an immediate consequence of Theorem \ref{bound}.
\begin{cor}\label{bounds}Let $G$ be a graph with a perfect matching and the maximum degree $\Delta$. Then
$$f(G)\leq af(G)\leq (\Delta-1)f(G), F(G)\leq Af(G)\leq (\Delta-1)F(G).$$
\end{cor}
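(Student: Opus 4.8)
The final statement to prove is Corollary~\ref{bounds}, which asserts that for a graph $G$ with a perfect matching and maximum degree $\Delta$, we have $f(G)\leq af(G)\leq(\Delta-1)f(G)$ and $F(G)\leq Af(G)\leq(\Delta-1)F(G)$.

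\medskip

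The plan is to derive everything directly from Theorem~\ref{bound}, which gives the pointwise inequalities $f(G,M)\leq af(G,M)\leq(\Delta-1)f(G,M)$ for every perfect matching $M$ of $G$, together with the two displayed definitions $af(G)=\min_M af(G,M)$, $f(G)=\min_M f(G,M)$, $Af(G)=\max_M af(G,M)$, and $F(G)=\max_M f(G,M)$. First, for the minimum anti-forcing number: pick a perfect matching $M_0$ attaining $af(G)=af(G,M_0)$. Then $f(G)\leq f(G,M_0)\leq af(G,M_0)=af(G)$ by minimality of $f(G)$ and the first inequality of Theorem~\ref{bound}; this proves $f(G)\leq af(G)$. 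For the upper bound, pick $M_1$ attaining $f(G)=f(G,M_1)$; then $af(G)\leq af(G,M_1)\leq(\Delta-1)f(G,M_1)=(\Delta-1)f(G)$, using minimality of $af(G)$ and the second inequality of Theorem~\ref{bound}. This establishes $f(G)\leq af(G)\leq(\Delta-1)f(G)$.

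\medskip

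Next, for the maximum versions: pick $M_2$ attaining $Af(G)=af(G,M_2)$. Then $Af(G)=af(G,M_2)\leq(\Delta-1)f(G,M_2)\leq(\Delta-1)F(G)$, using the second inequality of Theorem~\ref{bound} and maximality of $F(G)$; this gives $Af(G)\leq(\Delta-1)F(G)$. For the lower bound, pick $M_3$ attaining $F(G)=f(G,M_3)$; then $F(G)=f(G,M_3)\leq af(G,M_3)\leq af(G)\le Af(G)$, wait — more directly, $F(G)=f(G,M_3)\leq af(G,M_3)\leq Af(G)$ by the first inequality of Theorem~\ref{bound} and maximality of $Af(G)$. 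Hence $F(G)\leq Af(G)\leq(\Delta-1)F(G)$, completing the corollary.

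\medskip

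There is essentially no obstacle here: the corollary is a routine ``take min/max of both sides'' consequence of the pointwise bounds, and the only care needed is to apply minimality/maximality to the correct extremal matching in each of the four inequalities (one must not assume the same matching realizes $f(G)$ and $af(G)$, for instance). Since the paper already flags this as ``an immediate consequence of Theorem~\ref{bound},'' a short proof spelling out these four one-line estimates — or even just the phrase ``immediate from Theorem~\ref{bound} and the definitions of $f(G),af(G),F(G),Af(G)$'' — suffices.
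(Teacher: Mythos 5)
Your proof is correct and matches the paper's intent: the paper gives no explicit argument, calling the corollary an immediate consequence of Theorem~\ref{bound}, and your four one-line estimates (choosing the appropriate extremal matching for each inequality) are exactly the routine min/max argument being invoked.
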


Further,
$\text{Spec}_{f}(G):=\{f(G,M): M \mbox{ is a perfect matching of } G\}$ and $
\text{Spec}_{af}(G):=\{af(G,M): M \mbox{ is a perfect matching of } G\}$
are called the {\em forcing spectrum} \cite{Afshani} and the {\em anti-forcing spectrum} of $G$ respectively. For example, $\text{Spec}_{af}(\text{Triphenylene})=\{2,3,4\}$ and $\text{Spec}_{f}(\text{Triphenylene})=\{1,3\}$ (see Fig. \ref{spec}(a)),  $\text{Spec}_{f}(\text{Dodecahedron})=\{3\}$ \cite{Ye}(see Fig. \ref{spec}(b)).  Randi\'{c} and Vuki\v{c}evi\'{c} \cite{c60,c70} computed the distributions of forcing numbers of Kekul\'e structures of $\text C_{60}$ and $\text C_{70}$ respectively.

\begin{figure}
\centering
\includegraphics[scale=0.6]{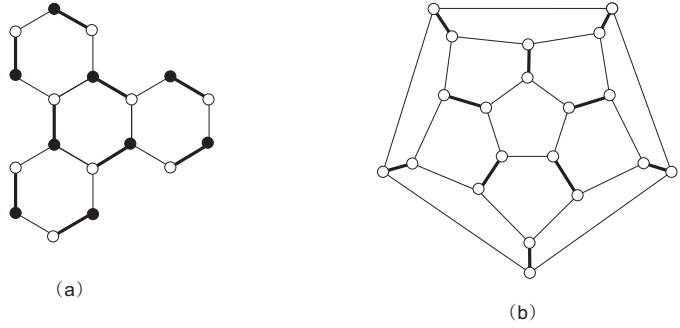}
\caption{(a) Triphenylene, (b) Dodecahedron.}\label{spec}
\end{figure}

For any given graph $G$ with a perfect matching $M$, we now consider the anti-forcing number $af(G,M)$. If $G$ has two $M$-alternating cycles that either are disjoint or intersect only at edges in $M$, then by Lemma \ref{anti} any anti-forcing set of $M$ contains an edge of each one of such $M$-alternating cycles. Thus it naturally motivates us to propose a novel concept: a collection $A$ of $M$-alternating cycles of $G$ is called a {\em compatible $M$-alternating set} if any two members of $A$ either are disjoint or intersect only at edges in $M$. Let $c'(M)$ denote the maximum cardinality of compatible $M$-alternating sets of $G$. By the above discussion we have the following immediate result.

\begin{lemma}\label{ineqality}For any perfect matching $M$ of a graph $G$, we have $af(G,M)\geq c'(M)$.
\end{lemma}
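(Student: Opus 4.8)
The plan is to exhibit, for an arbitrary anti-forcing set $S$ of $M$ and an arbitrary compatible $M$-alternating set $A$, an injection from $A$ into $S$; passing to a minimum such $S$ and a maximum such $A$ then gives $af(G,M)\ge c'(M)$.

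Concretely, write $A=\{C_1,\dots,C_k\}$ with $k=|A|$, and let $S\subseteq E(G)\setminus M$ be any anti-forcing set of $M$. By Lemma \ref{anti}, each $C_i$ meets $S$, so we may select an edge $e_i\in S\cap E(C_i)$; note $e_i\notin M$ since $S$ is disjoint from $M$. The crux is that the edges $e_1,\dots,e_k$ are pairwise distinct: if $e_i=e_j$ with $i\ne j$, then this edge lies in $E(C_i)\cap E(C_j)$, so $C_i$ and $C_j$ are not disjoint; by the definition of a compatible $M$-alternating set they then intersect only in edges of $M$, forcing $e_i\in M$, contrary to $e_i\in S$. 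Hence $i\mapsto e_i$ is injective and $|S|\ge k$. Fixing $A$ to be of maximum size and letting $S$ range over all anti-forcing sets of $M$ yields $af(G,M)=\min_S|S|\ge c'(M)$.

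I do not anticipate any real obstacle: the notion of a compatible $M$-alternating set is designed precisely so that deleting a single non-matching edge cannot destroy two members of the family simultaneously, which is the entire content of the argument. The only point worth stating explicitly is that a shared edge of two cycles in a compatible family must be a matching edge, which is immediate from the definition. (This inequality is the easy half of the minimax identity $af(G,M)=c'(M)$ established later for plane bipartite graphs; the reverse inequality is where planarity, via Lucchesi--Younger type duality, will be needed.)
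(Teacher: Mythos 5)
Your proof is correct and is essentially the same argument the paper uses (the paper treats it as immediate from Lemma \ref{anti} together with the observation that distinct members of a compatible $M$-alternating set can only share edges of $M$, while an anti-forcing set is disjoint from $M$). Writing out the injection $C_i\mapsto e_i$ explicitly is just a more detailed rendering of that same reasoning.
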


For plane bipartite graphs $G$ we can show that the equality in the above lemma always holds. The vertices of $G$ are colored with white and black such that any pair of adjacent vertices receive different colors. Such two color classes form a bipartition of $G$.

\begin{thm}\label{minimax}Let $G$ be a planar bipartite graph with a perfect matching $M$. Then $$af(G,M)=c'(M).$$
\end{thm}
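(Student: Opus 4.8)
The plan is to prove the reverse inequality $af(G,M) \le c'(M)$ by converting the problem into one about directed graphs and then invoking the Lucchesi–Younger theorem, in the same spirit as Pachter and Kim's proof of Theorem \ref{cycle}, but with the role of "disjoint" replaced by "compatible". First I would set up an auxiliary directed graph $D$ from the plane bipartite graph $G$ and the perfect matching $M$: orient every edge of $M$ from its white endpoint to its black endpoint and every edge of $E(G)\setminus M$ from its black endpoint to its white endpoint. Then directed cycles of $D$ correspond exactly to $M$-alternating cycles of $G$, and — crucially — two $M$-alternating cycles that intersect only at edges of $M$ (or are disjoint) translate, under this orientation, to directed cycles whose shared portions are consistently oriented, so the compatible $M$-alternating sets of $G$ are the analogue of "families of directed cycles that can be packed" in $D$.

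The key technical point is to pass to the planar dual and apply the min–max duality. In $D$, because $G$ is plane, I would contract each face or otherwise massage $D$ so that a minimum set of edges (not in $M$) meeting every $M$-alternating cycle becomes a minimum "feedback arc set" restricted to the $E(G)\setminus M$ arcs, and a maximum compatible $M$-alternating set becomes a maximum collection of arc-disjoint directed cycles in the appropriate dual digraph. The Lucchesi–Younger theorem \cite{LY} states that in a planar digraph the minimum size of a feedback arc set equals the maximum number of arc-disjoint directed cycles; applying it to the correct planar digraph derived from $D$ yields $af(G,M) = c'(M)$. The subtlety that makes "compatible" rather than "disjoint" come out is that we are only allowed to delete edges \emph{not} in $M$ (Lemma \ref{anti}), so two alternating cycles sharing only $M$-edges genuinely cannot be "killed" by a single permitted deletion, which is exactly what forces the packing number to count such families; I would need to check that the orientation above makes the $M$-edges behave like arcs that are "free" or contracted, so that arc-disjointness in the dual matches compatibility in $G$.

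The main obstacle I expect is making the correspondence between "arc-disjoint directed cycles in the dual digraph" and "compatible $M$-alternating cycles in $G$" precise and bijective, i.e. verifying that no information is lost when edges of $M$ are contracted or treated as free: one must be sure that a family of $M$-alternating cycles pairwise sharing only $M$-edges really does map to a family of directed cycles that are arc-disjoint on the relevant (deletable) arcs, and conversely. Once the right digraph is identified, the inequality $af(G,M)\le c'(M)$ follows from Lucchesi–Younger and, combined with Lemma \ref{ineqality}, gives equality. I would also double-check that planarity of $G$ indeed yields planarity (or the needed structure) of the auxiliary digraph, since Lucchesi–Younger fails for general digraphs.
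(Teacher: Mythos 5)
Your proposal is essentially the paper's own argument: orient edges of $M$ from white to black and the others from black to white, make the $M$-edges ``free'' by contraction so that compatible $M$-alternating cycles become arc-disjoint directed cycles and anti-forcing sets become feedback sets, and then apply the Lucchesi--Younger theorem to the resulting planar digraph, combined with Lemma \ref{ineqality} for the easy inequality. The only cleanup needed is that no passage to the planar dual or contraction of faces is involved: one simply contracts each edge of $M$ in the oriented graph, observes that the contracted digraph is still planar and that its directed cycles correspond bijectively to the $M$-alternating cycles of $G$, and Lucchesi--Younger then gives $af(G,M)=c'(M)$ directly.
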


To obtain  such a minimax result we need a classical result of Lucchesi and Younger \cite{LY} about directed graphs; Its shorter proof was ever given by Lov\'{a}sz \cite{LL}.  Let $D$ be a finite directed graph. A {\em feedback set} of $D$ is a set of arcs that contains at least one arc of each directed cycle of $D$.

\begin{thm}[Lucchesi and Younger]\cite{LY}\label{LY}For a finite planar digraph, a minimum feedback set has cardinality equal to that of a maximum  collection of arc-disjoint directed cycles.
\end{thm}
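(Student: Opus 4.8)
First I would dispose of the easy inequality: arc-disjoint directed cycles must be met by distinct arcs of any feedback set, so every feedback set has at least as many arcs as any packing of arc-disjoint directed cycles. The whole content of the theorem is the reverse inequality, that a minimum feedback set is no larger than a maximum such packing. The plan is to prove it by linear-programming duality, with an uncrossing argument as the combinatorial core; planarity is used only to reorganize the problem into a form where uncrossing applies.

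Accordingly I would first pass to the planar dual. Fixing a plane embedding of the digraph $D$, form the dual $D^{*}$ and orient each dual arc as the $90^{\circ}$ rotation of the primal arc it crosses. Since a directed cycle encloses a region with all of its arcs winding the same way around it, each simple directed cycle of $D$ corresponds to a directed cut (dicut) $\delta^{+}(X)$ of $D^{*}$ — a set of arcs with $\delta^{-}(X)=\emptyset$ — and this correspondence is a bijection on arcs. Under it a feedback set of $D$ becomes a dijoin of $D^{*}$ (a set meeting every dicut), and arc-disjoint cycles become arc-disjoint dicuts. Hence it suffices to prove the purely set-indexed statement: in any digraph the minimum size of a dijoin equals the maximum number of arc-disjoint dicuts. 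This reformulation is what makes uncrossing available, since dicuts are now indexed by vertex sets.

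For this I would study the covering program $\min\{\mathbf 1^{\top}x : x\ge 0,\ x(\delta^{+}(X))\ge 1 \text{ for every dicut } \delta^{+}(X)\}$, whose dual is the fractional packing of dicuts, and aim to show the system is totally dual integral; by the Edmonds--Giles theorem this yields simultaneously an integral minimum dijoin and an integral maximum dicut packing, and LP duality equates their sizes. To establish total dual integrality I would take an optimal dual solution $y$ supported on a family $\mathcal F$ of dicut-defining sets chosen to minimize $\sum_{X} y_{X}\,|X|\,(|V|-|X|)$, and show $\mathcal F$ is cross-free. Indeed, if $X,Y\in\mathcal F$ cross, then from $\delta^{-}(X)=\delta^{-}(Y)=\emptyset$ one gets $\delta^{-}(X\cap Y)=\delta^{-}(X\cup Y)=\emptyset$, so $X\cap Y$ and $X\cup Y$ also define dicuts; shifting weight $\min(y_{X},y_{Y})$ from $\{X,Y\}$ to $\{X\cap Y,\,X\cup Y\}$ preserves the objective and, by the arc-wise submodular inequality $\mathbf 1_{\delta^{+}(X\cap Y)}+\mathbf 1_{\delta^{+}(X\cup Y)}\le\mathbf 1_{\delta^{+}(X)}+\mathbf 1_{\delta^{+}(Y)}$, preserves feasibility, while strictly decreasing the chosen measure — a contradiction. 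A cross-free family gives a network (hence totally unimodular) arc--set incidence matrix, and total unimodularity with integral right-hand side delivers the integral optima.

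The hard part will be exactly this uncrossing step and its integrality payoff: verifying that intersections and unions of dicut-sets remain dicuts (preservation of $\delta^{-}=\emptyset$), that the weight shift never overloads any individual arc (the arc-wise submodularity of $\delta^{+}$), and that a cross-free incidence structure is genuinely totally unimodular. Once these are secured the min-max follows formally, and the planar cycle statement of the theorem is recovered by translating dicuts back to directed cycles through the dual correspondence.
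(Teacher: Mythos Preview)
The paper does not give a proof of this theorem at all: it is quoted as a classical result with citation \cite{LY}, and the surrounding text only remarks that a shorter proof was later given by Lov\'asz \cite{LL}. The theorem is then used as a black box in the proof of Theorem~\ref{minimax}. So there is no ``paper's own proof'' to compare your proposal against.

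Your outline is, in fact, essentially the Lov\'asz/Edmonds--Giles route the paper alludes to: pass by planar duality from feedback arc sets and arc-disjoint directed cycles in $D$ to dijoins and arc-disjoint dicuts in $D^{*}$, then prove the general (non-planar) Lucchesi--Younger min--max for dijoins versus dicuts by showing the covering LP is totally dual integral via an uncrossing argument on the dicut-defining sets. The key verifications you flag --- that $\delta^{-}(X)=\delta^{-}(Y)=\emptyset$ forces $\delta^{-}(X\cap Y)=\delta^{-}(X\cup Y)=\emptyset$, the arc-wise submodularity of $\delta^{+}$, and total unimodularity of a cross-free incidence matrix --- are exactly the ingredients of the standard proof and are correct as stated. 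One small point worth making explicit when you write it out: when $X$ and $Y$ cross, $X\cap Y$ and $X\cup Y$ are proper nonempty subsets, but you should also note why their outgoing cuts are nonempty (e.g., by assuming weak connectivity, or by observing that an empty dicut would correspond to an infeasible constraint that was never in the LP to begin with).
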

\noindent{\em Proof of Theorem \ref{minimax}.}  First assign a specific orientation of $G$ concerning $M$ to obtain a digraph $\vec G(M)$: any edge in $M$ is directed from white end to black end, and the  edges not in $M$ are directed from black ends to white ends. Obviously the  $M$-alternating cycles of $G$ corresponds naturally to directed cycles of its orientation. Then contract each edge of $M$ in $\vec G(M)$ to  a vertex (i.e. delete the edge and identify its ends) to get a new digraph, denoted by $\vec G\cdot M$.  We can see that there is a one-to-one correspondence between the  $M$-alternating cycles of $G$ and directed cycles of $\vec G\cdot M$. That is, an $M$-alternating cycle of $G$ becomes a directed cycle $\vec G\cdot M$, and a directed cycle of $\vec G\cdot M$ can produce an $M$-alternating cycle of $G$ when each vertex is restored to an edge of $M$. So by Lemma \ref{anti} a subset $S\subseteq E(G)\setminus M$ is an anti-forcing set of $M$ if and only if $S$ is a feedback set of $\vec G\cdot M$. Hence $af(G,M)$ equals the smallest cardinality of feedback sets of $\vec G\cdot M$. On the other hand, a compatible $M$-alternating set of $G$ corresponds to a set of arc-disjoint directed cycles of  $\vec G\cdot M$. That implies that $c'(M)$ equals the maximum number   of arc-disjoint  directed cycles of $\vec G\cdot M$. Note that $\vec G\cdot M$ is a planar digraph. So  Theorem  \ref{LY} implies $af(G,M)=c'(M)$. \hfill$\square$

\vskip 0.2cm

However,  the equality  in Lemma \ref{ineqality} does not necessarily hold in general. A counterexample is dodecahedron (see Fig. \ref{spec}(b)); For this specific perfect matching marked by bold lines, it can be confirmed that there are at most three compatible alternating cycles, but its anti-forcing number is at least four.

%


%

\section{Maximum anti-forcing number}

In this section we restrict our consideration to a hexagonal system $H$ with a perfect matching $M$. Without loss of generality, $H$ is placed in the plane such that an edge-direction is vertical and the peaks (i.e. those vertices of $H$ that just have two low neighbors, but no high neighbors) are black. An $M$-alternating cycle $C$  of $H$ is said to be {\em proper} (resp. {\em improper}) if each edge of $C$ in $M$ goes from white end to black end (resp. from black end to white end) along the clockwise direction of $C$. The boundary of $H$ means the boundary of the outer face. An edge on the boundary is a boundary edge.

The following main result shows that the maximum anti-forcing number equals the Fries number in a hexagonal system.

\begin{thm}\label{fries}Let $H$ be a hexagonal system with a perfect matching. Then $Af(H)=Fries(H)$.
\end{thm}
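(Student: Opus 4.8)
The plan is to prove the two inequalities $Af(H)\le Fries(H)$ and $Af(H)\ge Fries(H)$ separately, using Theorem~\ref{minimax} (the minimax identity $af(H,M)=c'(M)$ for the plane bipartite graph $H$) as the central bridge between anti-forcing sets and compatible alternating sets.

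For the lower bound $Af(H)\ge Fries(H)$, I would start from a Fries set: a perfect matching $M$ together with a maximum $M$-alternating set $\mathcal{F}$ of hexagons with $|\mathcal{F}|=Fries(H)$. Hexagons of $H$ are faces, so any two of them are interior-disjoint; their edge-intersections (a single shared edge, when they are adjacent) need to be controlled. The key observation is that along each hexagon in $\mathcal{F}$ the matching $M$ determines a proper or improper alternating orientation, and for two adjacent $M$-alternating hexagons their shared edge must lie in $M$ (since both hexagons alternate and the shared edge is forced into $M$ from both sides — otherwise one of them would have two consecutive non-matching edges at that edge). Hence any collection of pairwise adjacent-or-disjoint $M$-alternating hexagons is automatically a compatible $M$-alternating set in the sense of Section~2. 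Therefore $c'(M)\ge |\mathcal{F}|=Fries(H)$, and by Theorem~\ref{minimax} we get $af(H,M)=c'(M)\ge Fries(H)$, so $Af(H)=\max_M af(H,M)\ge Fries(H)$.

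For the upper bound $Af(H)\le Fries(H)$, fix a perfect matching $M$ achieving $af(H,M)=Af(H)$ and, via Theorem~\ref{minimax}, a maximum compatible $M$-alternating set $\mathcal{A}$ of cycles with $|\mathcal{A}|=c'(M)=Af(H)$. The goal is to convert $\mathcal{A}$ into an alternating set of hexagons (with respect to possibly a different perfect matching) of at least the same size. The natural move is to take any $M$-alternating cycle $C\in\mathcal{A}$ that is not already a hexagon and push it inward: since $H$ is a hexagonal system, the interior of $C$ is tiled by hexagons, and $C$ bounds a sub-hexagonal-system in which $M$ restricts to a perfect matching; then one can invoke the structure of $M$-alternating cycles in hexagonal systems (every $M$-alternating cycle of a hexagonal system encloses an $M$-alternating hexagon, indeed a ``nice'' hexagon) to replace $C$ by hexagons inside it. Doing this simultaneously for all members of $\mathcal{A}$, using the compatibility (they only meet along $M$-edges, so their interiors interact in a controlled nested/disjoint fashion), should produce an $M'$-alternating set of hexagons of size at least $|\mathcal{A}|$ for some perfect matching $M'$ obtained from $M$ by symmetric differences with cycles in the ``gaps,'' whence $Fries(H)\ge |\mathcal{A}|=Af(H)$.

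The hard part will be the upper bound: making precise the replacement of a general compatible family of $M$-alternating cycles by a family of hexagons of the same cardinality, without losing members when cycles are nested or share matching edges, and ensuring the resulting hexagons form a genuine alternating set for a single well-defined perfect matching. This is where the peaks-are-black normalization and the proper/improper dichotomy introduced just before the theorem statement are meant to be used — I expect the argument proceeds by choosing, inside each cycle region, a hexagon incident to a ``topmost'' matching edge or an extremal peak, peeling it off, and inducting on the number of hexagons enclosed, while a separate bookkeeping argument guarantees the peeled hexagons remain alternating and mutually compatible. The two easy directions sandwich $Af(H)$ between $Fries(H)$ and itself, so once the replacement procedure is established the theorem follows.
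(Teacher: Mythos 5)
Your lower bound $Af(H)\ge Fries(H)$ is correct and is exactly the paper's argument: a Fries set is a compatible $M$-alternating set (your observation that the shared edge of two adjacent $M$-alternating hexagons must lie in $M$ is a valid and welcome justification of this), so Theorem~\ref{minimax} gives $af(H,M)=c'(M)\ge Fries(H)$.

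The upper bound, however, is where the entire difficulty of the theorem lies, and your proposal leaves it as a sketch with two genuine gaps. First, you assert that compatibility makes the interiors of the cycles in $\mathcal{A}$ ``interact in a controlled nested/disjoint fashion.'' This is false as stated: two compatible cycles may \emph{cross}, i.e.\ share an $M$-edge at which one enters and leaves the interior of the other, so no nested/disjoint structure is available until one first performs an uncrossing step. The paper's Claim~1 does precisely this, rerouting a pair of crossing cycles at two consecutive crossing edges (splitting into the proper/improper cases) to obtain a non-crossing compatible family of the same size; nothing in your plan supplies this. Second, the move ``replace each non-hexagon cycle $C$ by an $M$-alternating hexagon enclosed in it'' does not by itself preserve either cardinality or compatibility: distinct nested members can enclose the same alternating hexagon, and the enclosed hexagon may meet other members of $\mathcal{A}$ in edges not in $M$; moreover, when you fix this by switching to a new matching $M'$ via symmetric differences, you risk destroying the alternation of hexagons already collected. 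The paper avoids a direct replacement altogether: it chooses $A$ maximum, non-crossing, and minimizing the total number of enclosed hexagons (the $h$-index), and then runs a delicate boundary induction (Claim~3, on the corner hexagons $S(i,j)$ of a minimal non-hexagon member) showing that either the $h$-index could be decreased, or a strictly larger compatible set could be built for a modified matching $M'=M\oplus S(i+1,1)\oplus\cdots$ --- both contradictions --- so every member of $A$ is already a hexagon. Your own closing paragraph concedes that this peeling-and-bookkeeping step is unproven; since that step is the theorem, the proposal as it stands does not establish $Af(H)\le Fries(H)$.
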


\begin{proof}Since any Fries set of $H$ is a compatible $M$-alternating set $A$ for some perfect matching $M$ of $H$, we have that $Af(H)\geq Fries(H)$ from Theorem \ref{minimax}. So we now prove that $Af(H)\leq Fries(H)$. It suffices to prove that for a  compatible alternating set $A$ of $H$ with $|A|=Af(H)$, we can find a { Fries set} $F$ of $H$ such that $|A|\leq |F|$.

Given any compatible $M$-alternating set $A$ of $H$ with a perfect matching $M$. Two cycles $C_1$ and $C_2$ in $A$  are {\em crossing} if they share an edge $e$ in $M$ and the four edges adjacent to $e$ alternate in $C_1$ and $C_2$ (i.e. $C_1$ enters into $C_2$ from one side and leaves from the other side via $e$). Such an edge $e$ is said to be a crossing. For example, see Fig. 2.  We say $A$ is {\em non-crossing} if any two cycles in $A$ are not crossing.

\begin{figure}
\centering
\includegraphics[scale=0.75]{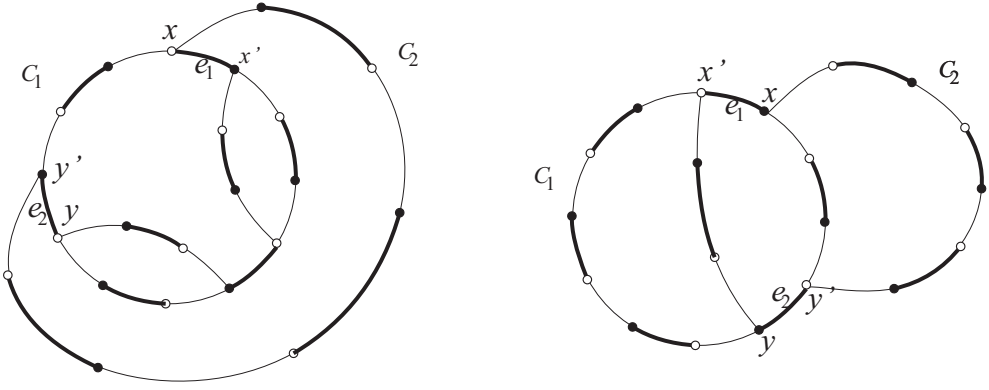}
\caption{Two ways of crossing $M$-alternating cycles $C_1$ and $C_2$ (bold lines are edges  in $M$).} \label{cross}
\end{figure}

\noindent{\bf Claim 1.} For  any compatible $M$-alternating set $A$ of $H$, we can find the corresponding non-crossing compatible $M$-alternating set $A'$ of $H$ such that $|A'|=|A|$.

\begin{proof} Suppose $A$ has a pair of crossing members $C_1$ and $C_2$. In fact $C_1$ and $C_2$ have even number of crossings. Let $e_1$ and $e_2$ be two consecutive crossings, which   are edges in $M$.  So we may suppose along the counterclockwise direction  $C_2$ from edge $e_1=xx'$ enters into the interior of $C_1$, then reaches the crossing $e_2=yy'$. Note that $x$ is the first vertex of $C_2$ entering in $C_1$ and $y'$ the first vertex of $C_2$ leaving from $C_1$ after $x$.  For convenience, if a cycle $C$ in $H$ has two vertices $s$ and $t$, we always denote by $C(s,t)$ the path from $s$ to $t$ along $C$ clockwise. If $C_1$ is a proper $M$-alternating cycle and $C_2$ is an improper $M$-alternating cycle,   let  $C_1':=C_1(y,x')+C_2(y,x')$ and $C_2':=C_1(x',y)+C_2(x',y)$ (see Fig. 2(left)).   If $C_1$ and $C_2$ both are proper (resp. improper) $M$-alternating cycles,   let  $C_1':=C_1(y',x)+C_2(x,y')$ and $C_2':=C_1(x,y')+C_2(y',x)$ (see Fig.  2(right)). In all such cases $C_1$ and $C_2$ in $A$ can be replaced with $C_1'$ and $C_2'$ to get a new compatible $M$-alternating set of $H$ and such a pair of crossings $e_1$ and $e_2$ disappeared. Since such a change cannot produce any new crossings, by repeating the above process we finally get a compatible $M$-alternating set $A'$ of $H$ that is non-crossing.  It is obvious that $|A'|=|A|$.  \end{proof}

For a cycle $C$ of $H$, let $h(C)$ denote the number of hexagons in the interior of $C$.  By Claim 1 we can choose  a perfect matching $M$ of $H$ and a maximum compatible $M$-alternating set $A$  satisfying that (i) $|A|=Af(H)$ and (ii) $A$ is non-crossing,   and $h(A):=\sum_{C\in A}{h(C)}$ is as minimal as possible subject to (i) and (ii). We call $h(A)$ the $h$-{\em index} of $A$.

By the above choice we know that for any two cycles in $A$ their interiors either are disjoint or one contains the other one. Hence the cycles in $A$ form a {\em poset} according to the containment relation of their interiors. Since each $M$-alternating cycle has an $M$-alternating hexagon in its interior (cf. \cite{Zhang}), we immediately obtain the following claim.

\noindent{\bf Claim 2.} Every minimal member of $A$ is a hexagon.

It suffices to prove that all members of $A$ are hexagons. Suppose to the contrary that  $A$ has at least one non-hexagon member.   Let  $C$ be  a minimal non-hexagon member in $A$. Then $C$ is an $M$-alternating cycle. We consider a new hexagonal system $H'$ formed by $C$ and its interior as a subgraph of $H$. Without loss of generality, suppose that $C$ is a proper $M$-alternating cycle (otherwise, analogous arguments are implemented on right-top corner of $H'$).  So we can find  a substructure of $H'$ in its left-top corner as follows.

\begin{figure}
\centering
\includegraphics[scale=0.9]{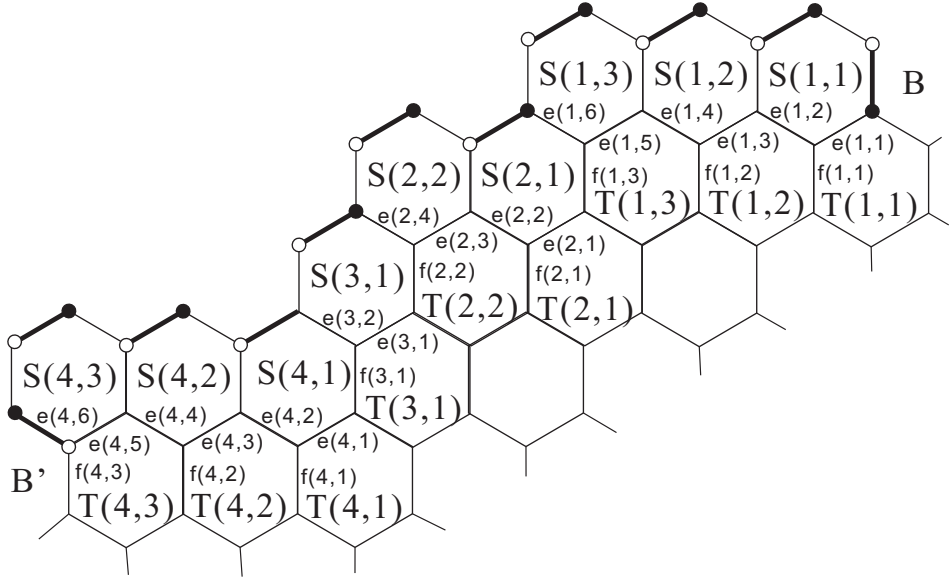}
\caption{Illustration for the proof of Claim 3 (bold lines are edges  in $M$, $m=4,n(1)=3,(n(2)=2,n(3)=1,n(4)=3)$.}\label{claim3}
\end{figure}

We follow the notations of Zheng and Chen \cite{ZC}.  Let $S(i,j), 1\le i\le m$ and $1\le j\le n(i)$, be a series of hexagons on the boundary of $H'$ as Fig. 3 that form a hexagonal chain and satisfy that neither $B$ nor $B'$ is contained in $H'$. We denote edges, if any, by $e(i,k), 1\le i\le m$ and $1\le k\le 2n(i)$, and by $f(i,j),  1\le i\le m$ and $1\le j\le n(i)$; and denote the hexagons (not necessarily contained in $H'$) with both edges $f(i,j)$  and $e(i, 2j-1)$, by $T(i,j), 1\le i\le m$ and $1\le j\le n(i)$ (see Fig. 3).

\noindent{\bf Claim 3.} (a) $n(1)=1$, and $m\geq 2$,\\
(b)  $n(i)=1$ or 2 for all $1\leq i\leq m$,  \\
(c)  for all $1\leq i\leq m$, $f(i,n(i))\in M$, and \\
(d)  if $n(i)=2$, $2\le i\le m$, then  $S(i,1)\in A$.

\begin{proof}We now prove the claim by induction on $i$. We first consider $i=1$.  If $e(1,2)\in M$, then $S(1,1)$ is a proper $M$-alternating hexagon. So $C$ in $A$ can be replaced with $S(1,1)$ to produce a new compatible  $M$-alternating set $A'$. That is, $A':=(A\cup S(1,1))-\{C\}$,  but $|h(A')|<|h(A)|$, a contradiction. So $e(1,2)\notin M$, which implies that  $f(1,1)\in M$ and all edges $e(1,3),e(1,5),\ldots, e(1, 2n(1)-1)$  belong to $M$. Hence $S(2,1)$ is a hexagon of $H'$ and $m\geq 2$.  If $n(1)\geq 2$, since the boundary $C$ of $H'$ is a proper $M$-alternating cycle,  none of the edges $e(1,2),e(1,3),\ldots, e((1,2n(1))$  is a boundary edge of $H'$.
In this case the cycle $C$ can be replaced with $C\oplus S(1,n(1)$ to get another compatible $M$-alternating set with less index $h$-index than  $A$, also a contradiction. Hence $n(1)=1$.  So the claim holds for $i=1$.

  Suppose $1\leq i< m$ and  Claim 3 holds for any integer $1\leq i'\leq i$. We want to show that it holds for $i+1$.   There are  two cases to be considered.

{\bf Case 1.}  $n(i)=1$. Suppose that  $n(i+1)\geq 3$. If $e(i+1,2)\notin M$, then  $e(i+1,3), e(i+1,5),\ldots, e(i+1, 2n(i)-1)$ all belong to $M$. By an analogous argument as above, we have that   $T(i+1,2),\ldots, T(i+1,n(i+1)), S(i+2,1)$ are hexagons of $H'$,  and $C$ can be replaced with $C\oplus S(i+1,n(i+1))$ to get another $M$-compatible alternating set with less  $h$-index than  $h$, also a contradiction.  Hence $e(i+1,2)\in M$. By the induction hypothesis we have  $f(i,1)\in M$,  and  $S(i+1,1)$ is an $M$-alternating hexagon. If $e(i+1,4)\notin M$, the similar contradiction occurs. So  $e(i+1,4)\in M$. We can see that none of members of $A$ but $C$ intersect $S(i+1,1)$. Then $(A\cup \{S(i,1),S(i+1,1),S(i+2,2)\})-\{C\}$ is a compatible $M\oplus S(i+1,1)$-alternating set, which is larger than $A$, contradicting the choice of $A$. Hence $n(i+1)\le 2$.  If $n(i+1)=1$, then $f(i+1,1)\in M$. Otherwise, $C$ in $A$  would be replaced with $S(i+1,1)$ to obtain a similar contradiction. If $n(i+1)=2$,  by the similar arguments we have that $e(i+1,2)\in M$ and $f(i+1,2)\in M$. So $S(i+1,1)\in A$.

{\bf Case 2.} $n(i)=2$. Choose an integer $i_0$ with $1\leq i_0 <i$ such that $n(i_0)=1$, and $n(i_0+1)=n(i_0+2)=\cdots =n(i)=2$.  By the induction hypothesis, we have that the right vertical edge of hexagon $S(i_0,1)$ belongs to $M$, the hexagons $S(i_0+1,1), S(i_0+2,1),\ldots, S(i,1)$ are all proper $M$-alternating hexagons, which all belong to $A$, and $f(i,2)\in M$.  If $e(i+1, 2)\notin M$, then $f(i+1,1)\in M$. We have that $n(i+1)=1$; otherwise, $n(i+1)\geq 2$ and $C$ would be replaced with $C\oplus S(i+1,n(i+1))$ to get another $M$-compatible alternating set with less $h$-index  than  $A$, also a contradiction.  So suppose that  $e(i+1, 2)\in M$. Then $S(i+1,1)$ is a proper $M$-alternating hexagon. We claim that $n(i+1)=2$ and $f(i+1,n(i+1))\in M$.  If $n(i+1)=1$, then $e(i+1, 2)$ belongs to $C$.  So $C$ can be replaced with  $S(i+1,1)$ also to get a contradiction.  Hence  $n(i+1)\geq 2$.  Suppose $e(i+1,4)\in M$. Let $M'=M\oplus S(i+1,1)\oplus S(i,1)\oplus\cdots \oplus S(i_0+1,1)$ .  Then $M'$ is a perfect matching of $H$ so that $S(i+1,2),  S(i+1,1), S(i,2),  S(i,1), \ldots, S(i_0+1,2),  S(i_0+1,1),S(i_0,1)$ are $M$-alternating hexagons. Let $A':=(A\cup\{S(i+1,2),   S(i,2),   \ldots, S(i_0+1,2) ,S(i_0,1)\})-\{C,T(i,2),\ldots,T(i_0+1,2)\}$.  Then $A'$ is a compatible $M'$-alternating set of $H$ with $|A|< |A'|$, contradicting the choice for $A$. Hence $e(i+1,4)\notin M$ and $f(i+1,2)\in M$. If $n(i+1)\geq 3$, then $e(i+1,5),  e(i+1,7),\ldots, e(i+1,2n(i+1)-1)$ all belong to $M$, so $C$ can be replaced with $C\oplus S(i+1,n(i+1))$ to get a similar contradiction. Hence $n(i+1)=2$ and the claim holds. Further we have that $S(i+1,1)\in A$

 Now we have completed the proof of Claim 3.  \end{proof}

By Claim 3 we have that  $f(m,n(m))\in M$. That implies that $e(m, 2n(m))\notin M$. So $S(m+1,1)$ exists in $H'$, a contradiction. Hence each member of $A$ is a hexagon.
\end{proof}

Combining Theorems \ref{clar} and \ref{fries} with Corollary \ref{bounds}, we immediately obtain the following relations between the Clar number and Fries number.

\begin{cor}Let $H$ be a hexagonal system. Then $cl(H)\leq Fries(H)\leq 2cl(H)$.
\end{cor}

\section{Some extremal classes}

\subsection{All-kink catahexes}
Let $H$ be a hexagonal system. The inner dual $H^*$ of $H$ is a plane graph: the center of each hexagon $h$ of $H$ is placed a vertex $h^*$ of $H^*$, and if two hexagons of $H$ share an edge, then the corresponding  vertices are joined by an edge. $H$  is called catacondensed if its inner dual is a tree. Further $H$ is called {\em all-kink catahex} \cite{Harary} if it is catacondensed and no two hexagons  share a pair of parallel edges of a hexagons. The following result due to Harary et al. gives a characterization for a hexagonal system to have the Fries number (or the maximum anti-forcing number) achieving the number of hexagons.

\begin{thm}\cite{Harary}\label{Harary} For a hexagonal system $H$ with $n$ hexagons, $Fries(H)\leq n$, and equality holds  if and only if $H$ is an all-kink catahex.
\end{thm}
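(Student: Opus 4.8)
The inequality $Fries(H)\le n$ needs no argument, as a Fries set consists of $M$-alternating hexagons and $H$ has exactly $n$ hexagons; so the plan is to establish the two implications behind the equality case, working throughout with $M$-alternating hexagons rather than passing through the anti-forcing reformulation. Recall that $Fries(H)=n$ means: some perfect matching $M$ has all $n$ hexagons $M$-alternating, equivalently $M\cap E(h)$ is one of the two ``alternate'' triples of edges of the $6$-cycle $h$ for every hexagon $h$.

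For the ``only if'' direction, suppose such an $M$ exists. First I would show $H$ must be catacondensed: if not, $H$ has an internal vertex $v$ of degree $3$ whose three incident edges are all shared, with three hexagons around $v$; exactly one of those edges lies in $M$, so the hexagon bounded by the other two contains two consecutive edges missing from $M$ and cannot be $M$-alternating, a contradiction. Hence $H$ is catacondensed, so $|V(H)|=4n+2$ and $|M|=|V(H)|/2=2n+1$, and its $n-1$ internal (shared) edges even form a matching, since two shared edges meeting at a vertex would again create an internal vertex. Next, letting $k$ be the number of internal edges in $M$ and double counting incidences between $M$ and hexagons (each alternating hexagon contributing its three $M$-edges), $3n=\sum_{h}|E(h)\cap M|=2k+(|M|-k)=k+|M|$, so $k=3n-|M|=3n-(2n+1)=n-1$: \emph{every} internal edge lies in $M$. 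Finally, if $H$ were not all-kink it would contain a hexagon $h$ flanked by two neighbours along a pair of opposite edges; both of these then lie in $M$, but opposite edges of a $6$-cycle never both belong to an alternate triple, contradicting that $h$ is $M$-alternating. So $H$ is an all-kink catahex.

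For the ``if'' direction I would build the matching explicitly. Assume $H$ is an all-kink catahex with $n\ge 2$ (the case $n=1$ is trivial). By the remarks above its $n-1$ internal edges form a matching $M_0$; and since $H$ has no internal vertex, the edges a hexagon $h$ shares with its neighbours are pairwise non-adjacent, while the all-kink condition forbids a pair of them being opposite, so up to rotation the shared edges of $h$ occupy positions $\{1\}$, $\{1,3\}$ or $\{1,3,5\}$ of the $6$-cycle $h=x_1\cdots x_6$, and every vertex of $h$ lying on no shared edge has degree $2$ in $H$ (else it would lie on a further shared edge of $h$). Now put all of $M_0$ into $M$; then at each degree-$2$ vertex the matching edge is forced, and following the forcing around each hexagon one checks that in each of the three cases the forced edges complete $h$ exactly to the alternate triple $\{e_1,e_3,e_5\}$ of $M$-edges. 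Since the forced edges are boundary edges and hence never shared, and every non-shared vertex belongs to a unique hexagon, these local completions are mutually consistent; their union with $M_0$ is a perfect matching $M$ of $H$ for which all $n$ hexagons are $M$-alternating, so $Fries(H)\ge n$ and equality holds.

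The part needing care, rather than any real insight, is the structural bookkeeping for catacondensed hexagonal systems that both directions use: that internal edges form a matching, that the shared edges of a hexagon are pairwise non-adjacent (so a $3$-valent hexagon is automatically annelated at alternate edges), that vertices on no shared edge are $2$-valent, and --- the crux of the ``if'' direction --- that the edge-by-edge forced completion of $M_0$ around the various hexagons glues into one global perfect matching. Each of these reduces quickly to ``$H$ has no internal vertex'' together with the standard fact that two hexagons of a hexagonal system which meet in a vertex already share an edge, so once these lemmas are isolated the argument is routine.
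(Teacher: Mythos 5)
This statement is quoted in the paper from Harary, Klein and \v{Z}ivkovi\'{c} and is not proved there, so there is no in-paper argument to compare against; judged on its own, your proof is correct and self-contained. The only ingredient you lean on without comment that actually needs a word is the equivalence between the paper's definition of catacondensed (inner dual is a tree) and the property you actually use (no internal vertices): your degree-$3$ argument rules out internal vertices, and you should either cite or sketch why a cycle in the inner dual forces an internal vertex (e.g.\ a shortest such cycle yields three mutually adjacent hexagons, whose common vertex is internal). Everything else checks out: the local obstruction at an internal vertex (two consecutive non-$M$ edges on one of the three surrounding hexagons), the count $|V(H)|=4n+2$ for catacondensed systems, and the double count $3n=\sum_h|E(h)\cap M|=k+|M|$ forcing all $n-1$ internal edges into $M$ --- which immediately kills any hexagon annelated at a pair of opposite edges --- are all sound. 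In the converse direction, your observations that the shared edges of a hexagon are pairwise non-adjacent (no internal vertices) and pairwise non-opposite (all-kink), hence lie in a single alternate triple, and that every vertex off the shared edges has degree $2$ and belongs to a unique hexagon, do make the forced completion of the internal-edge matching $M_0$ around each hexagon well defined and globally consistent, yielding a perfect matching with all $n$ hexagons alternating. This is a clean counting proof; it is worth noting that it also gives slightly more than the statement, namely that when $Fries(H)=n$ the realizing perfect matching necessarily contains every internal edge.
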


An  independent (or stable) set of a graph $G$ is a set of  vertices  no two of which are  adjacent. The independence number of $G$, denoted by $\alpha(G)$, is the largest cardinality of independent sets of $G$.

\begin{thm}For an all-kink catahex $H$, $Af(H)=2F(H)$ if and only if the inner dual $H^*$ has a perfect matching.
\end{thm}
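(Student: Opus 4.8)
The plan is to push everything over to the inner dual $H^*$, which is a tree because $H$ is catacondensed. First I would collect the reductions provided by the earlier results. By Theorem~\ref{fries}, $Af(H)=Fries(H)$; since $H$ is an all-kink catahex, Theorem~\ref{Harary} gives $Fries(H)=n$, where $n$ denotes the number of hexagons of $H$; and by Theorem~\ref{clar}, $F(H)=cl(H)$. So $Af(H)=n$, and the assertion $Af(H)=2F(H)$ is equivalent to the purely combinatorial equality $2\,cl(H)=n$.

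The heart of the argument is the identity $cl(H)=\alpha(H^*)$. The inequality $cl(H)\le\alpha(H^*)$ is immediate: a Clar set is a family of pairwise disjoint hexagons, and in a catacondensed benzenoid two distinct hexagons have a common vertex if and only if they share an edge, i.e.\ if and only if their vertices in $H^*$ are adjacent; so a Clar set induces an independent set of $H^*$. For the reverse inequality I would use Theorem~\ref{Harary} once more: $Fries(H)=n$ means there is a perfect matching $M^*$ of $H$ whose Fries set is the set of \emph{all} $n$ hexagons, so every hexagon of $H$ is $M^*$-alternating. Then, for a maximum independent set $I$ of $H^*$, the corresponding $\alpha(H^*)$ hexagons are pairwise disjoint and each is $M^*$-alternating; hence $I$ is an $M^*$-resonant set and $cl(H)\ge\alpha(H^*)$.

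It then remains only to recognize when $\alpha(H^*)=n/2$. Since $H^*$ is a tree, hence bipartite, K\"onig's theorem together with the complementarity of maximum independent sets and minimum vertex covers yields $\alpha(H^*)=n-\mu(H^*)$, where $\mu(H^*)$ is the matching number of $H^*$. Consequently $2\,cl(H)=n\iff 2\,\alpha(H^*)=n\iff\mu(H^*)=n/2\iff H^*$ has a perfect matching (each of these conditions forces $n$ to be even, so no parity case has to be treated separately). Chaining this with the first paragraph gives $Af(H)=2F(H)\iff H^*$ has a perfect matching.

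The step I expect to require genuine care is the lower bound $cl(H)\ge\alpha(H^*)$. It is false for general catacondensed systems --- anthracene has $cl=1$ while $\alpha(H^*)=2$ --- so the all-kink hypothesis has to enter essentially, and in the argument above it does so exactly through Theorem~\ref{Harary}: it is only for an all-kink catahex that a \emph{single} perfect matching makes every hexagon alternating simultaneously, which is what converts an arbitrary maximum family of pairwise disjoint hexagons into a resonant set. A secondary point to nail down is the dictionary ``disjoint hexagons $\leftrightarrow$ non-adjacent vertices of $H^*$'', which reduces to the elementary fact that in the hexagonal lattice two faces sharing a vertex already share an edge.
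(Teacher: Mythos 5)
Your proof is correct and follows essentially the same route as the paper: reduce via $Af(H)=Fries(H)=n$ and $F(H)=cl(H)=\alpha(H^*)$, then apply the K\"onig--Gallai identity $\alpha(H^*)+\nu(H^*)=n$ on the bipartite tree $H^*$. The only difference is that you explicitly justify the key claim $cl(H)\geq\alpha(H^*)$ (which fails for general catacondensed systems, e.g.\ anthracene) by invoking the all-alternating perfect matching guaranteed by Theorem~\ref{Harary}, a step the paper merely asserts with ``any set of disjoint hexagons of $H$ is a resonant set''; this is a worthwhile clarification rather than a divergence.
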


\begin{proof}By Theorem \ref{Harary}, $Af(H)$ equals the number $n$ of vertices of $H^*$. Note that any set of disjoint hexagons of $H$ is a resonant set. By Theorem \ref{clar}, $F(H)=cl(H)=\alpha(H^*)$. Since $H^*$ is a bipartite graph, $\nu(H^*)+\alpha(H^*)=n$, where $\nu(H^*)$ denotes the matching number of $H^*$, the size of a maximum matching of $H^*$. So this equality implies the result.\end{proof}

For a hexagonal system $H$ with a perfect matching $M$, let $fries(M)$ be the number of $M$-alternating hexagons of $H$. Then $Fries(H)$ is the maximal value of $fries(M)$ over all perfect matchings. The minimal value of $fries(M)$ over all perfect matchings $M$ is called the {\em minimum fries number}, denoted by $fries(H)$. For an all-kink catahex, each hexagon has two choices for three disjoint edges, and just one's edges can be glued with other hexagons, so these three edges are called {\em fusing edges}. If a fusing edge is on the boundary, then an additive hexagon is glued along it to get a larger all-kink catahex.

 A {\em dominating set} of a graph $G$ is a set $S$ of vertices of $G$ such that every vertex not in $S$ has a neighbor in $S$.
An independent dominating set of $G$ is a set of vertices of $G$ that is
both dominating and independent in $G$ \cite{GH}. The independent domination number of $G$, denoted by $i(G)$, is the minimum size of
 independent dominating sets of $G$. (For a survey on independent domination, see \cite{GH})

\begin{thm}For an all-kink catahex $H$, $f(H)=i(H^*)=fries(H)$.
\end{thm}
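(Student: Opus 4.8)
The plan is to prove the two equalities $f(H)=i(H^*)$ and $i(H^*)=fries(H)$ separately, exploiting the very rigid structure of an all-kink catahex. Recall that in an all-kink catahex every hexagon $h$ has exactly two sets of three mutually disjoint edges, and in precisely one of these two sets all the ``fusing edges'' (those shared with neighbouring hexagons, or lying on the boundary at a kink) occur; call the other set the \emph{inner} triple of $h$. A first observation I would record is that because $H$ is an all-kink catahex, a perfect matching $M$ of $H$ is completely determined, hexagon by hexagon, by which of its two disjoint edge-triples it uses, and a hexagon $h$ is $M$-alternating if and only if $M$ restricted to $h$ is one of these two triples entirely — equivalently, if and only if none of the three fusing edges of $h$ lies in $M$ in a way forced by a neighbour. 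I would make this dependency precise so that perfect matchings of $H$ are encoded by certain $2$-colourings of $V(H^*)$, and $M$-alternating hexagons correspond to the colour class on which each vertex ``agrees with itself''.

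For the inequality chain I would argue as follows. First, $fries(M)$ counts $M$-alternating hexagons; minimising over $M$, a hexagon $h$ fails to be $M$-alternating exactly when some neighbour forces an edge of $h$ into $M$. Translating to $H^*$: choosing $M$ amounts to orienting/colouring, and the set of non-$M$-alternating hexagons, together with its relationship to the neighbours that ``spoil'' it, behaves like the complement of an independent dominating set. Concretely I expect to show $fries(H)=n-i(H^*)$ is \emph{false} as stated and instead that the $M$-alternating hexagons of a cleverly chosen $M$ form exactly an independent dominating set of $H^*$ — so $fries(H)\le i(H^*)$ by exhibiting, from a minimum independent dominating set $S$ of $H^*$, a perfect matching $M$ whose alternating hexagons are precisely those in $S$; and conversely $fries(H)\ge i(H^*)$ because the alternating hexagons of any $M$ must be independent in $H^*$ (two adjacent hexagons cannot both be $M$-alternating, since a shared fusing edge cannot simultaneously be ``inner'' for both) and must dominate $H^*$ (every hexagon not $M$-alternating has a neighbour that forces it, and that neighbour is itself alternating). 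This gives $fries(H)=i(H^*)$.

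For $f(H)=i(H^*)$ I would use Lemma~\ref{forcing} together with the structural fact that in a hexagonal system every $M$-alternating cycle encloses an $M$-alternating hexagon. Thus a forcing set of $M$ need only ``hit'' the $M$-alternating hexagons, and since in an all-kink catahex distinct $M$-alternating hexagons are vertex-disjoint (independence in $H^*$), a minimum forcing set of $M$ uses exactly one edge per $M$-alternating hexagon, giving $f(H,M)=fries(M)$ for every $M$ — hence $f(H)=fries(H)=i(H^*)$. The one point needing care is the claim that hitting all $M$-alternating \emph{hexagons} suffices to hit all $M$-alternating \emph{cycles}; here I would use that every $M$-alternating cycle $C$ contains an $M$-alternating hexagon in its interior and that, in a catacondensed system, one can pick such an interior hexagon whose $M$-edge picked for the forcing set actually lies on $C$ — or more robustly, induct on $h(C)$, peeling off a boundary hexagon of the subsystem bounded by $C$ as in the proof of Theorem~\ref{fries}.

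The main obstacle I anticipate is the bookkeeping in the correspondence between perfect matchings of $H$ and combinatorial data on $H^*$: making rigorous exactly when a shared edge is ``forced'' and showing that the greedy/local choices can always be completed to a global perfect matching (no parity obstruction around the tree $H^*$). Since $H^*$ is a tree this completion should go through by a straightforward induction on the leaves, but the interaction between the ``kink'' condition (no two hexagons sharing parallel edges) and the two edge-triples of each hexagon is what makes the alternating hexagons coincide with an independent dominating set rather than merely an independent set, and that is the delicate part of the argument.
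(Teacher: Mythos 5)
Your lower bound $fries(H)\geq i(H^{*})$ rests on the claim that for \emph{every} perfect matching $M$ the $M$-alternating hexagons form an independent set in $H^{*}$ (the same disjointness is also what your claim $f(H,M)=fries(M)$ for every $M$ needs). This is false: two adjacent hexagons can both be $M$-alternating when their shared fusing edge lies in $M$ and each of them uses the alternate triple containing that edge. Concretely, in triphenylene the perfect matching containing the three shared edges makes all four hexagons $M$-alternating, so $fries(M)=4$ while $f(H,M)=c(M)=3$ (consistent with $\text{Spec}_{f}(\text{Triphenylene})=\{1,3\}$ quoted in Section 2); more generally, by Theorem \ref{Harary} every all-kink catahex has a perfect matching in which \emph{all} $n$ hexagons are alternating, so independence of the alternating hexagons fails for every $H$ with more than one hexagon. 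Hence $f(H,M)=fries(M)$ is false pointwise, and the $\geq$ half of your second equality collapses: even if you had proved the domination statement (every non-alternating hexagon has an alternating neighbour --- which you assert but do not actually establish), a dominating set only bounds $fries(M)$ below by the domination number $\gamma(H^{*})$, not by $i(H^{*})$; without independence you cannot reach $i(H^{*})$.

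The paper avoids exactly this trap by routing the lower bound through the forcing number: by Theorem \ref{cycle} and the absence of interior vertices, $f(H,M)=c(M)$, the maximum number of \emph{disjoint} $M$-alternating hexagons, so independence in $H^{*}$ is built in. Taking $M$ with $f(H)=f(H,M)$, a maximum set of disjoint $M$-alternating hexagons is shown to be independent \emph{and} dominating, giving $f(H)\geq i(H^{*})$; and $fries(M)\geq c(M)=f(H,M)\geq f(H)$ for every $M$ gives $fries(H)\geq f(H)$. Your construction of a matching from a minimum independent dominating set $S$ whose alternating hexagons are precisely $S$ is the same as the paper's and correctly yields $f(H)\leq i(H^{*})$ and $fries(H)\leq i(H^{*})$; and your observation that hitting all $M$-alternating hexagons hits all $M$-alternating cycles is in fact true in a catahex (an alternating hexagon inside an alternating cycle has its $M$-edges on that cycle, since there are no interior vertices), but the subsequent ``one forcing edge per alternating hexagon'' count again silently uses the false disjointness. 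To repair your route you would have to prove independence plus domination for a \emph{fries-minimizing} matching specifically, via an exchange argument you have not supplied; the cleaner fix is the paper's detour through $c(M)$.
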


\begin{proof} For any perfect matching $M$ of $H$, by Theorem \ref{cycle} we have that $f(H,M)=c(M)$. Note that  $H$ has no interior vertices. Since each $M$-alternating cycle of $H$ contains an $M$-alternating hexagon in its interior, $c(M)$ equals the maximum number of disjoint $M$-alternating hexagons of $H$.  It is obvious that for a hexagon of $H$ a non-fusing edge belongs  to $M$ if and only if the three non-fusing edges belong to $M$.

Choose a perfect matching $M$ of $H$ such that $f(H)=f(H,M)$.
Let $S$ be a maximum set of disjoint $M$-alternating hexagons of $H$ and $S^*:=\{h^*:h\in S\}$. Then $f(H)=|S^*|$. We claim that  $S^*$ is an independent dominating set of $H^*$. Let $h$ be any hexagon  of $H$ not in $S$.  If some hexagon $h'$ of $H$ adjacent to $h$ has the three non-fusing edges in $M$, then $h'\in S$. Otherwise, $h$ is an $M$-alternating hexagon. Since $h\notin S$ and $S$ is maximum, some hexagon of $H$ adjacent to $h$ must belong to $S$. So the claim holds, and $f(H)\geq i(H^*)$. Conversely, given a minimum independent dominating set $S^*$ of $H^*$. Construct a perfect matching $M_0$ of $H$ as follows. The three non-fusing edges of each hexagon in $S$ are chosen as edges of $M_0$. For any hexagon of $H$ not in $S$,  a fusing edge that  is  a boundary edge or   shared by the other hexagon not in $S$ is also an edge of $M_0$. So we can see that $M_0$ is a perfect matching of $H$ and any hexagon of $H$ not in $S$ is not $M_0$-alternating. Hence $S$ is the maximum set of $M_0$-alternating hexagons of $H$. So $i(H^*)=f(H,M_0)\geq f(H)$. Hence $i(H^*)= f(H)$.

\begin{figure}
\centering
\includegraphics[scale=0.5]{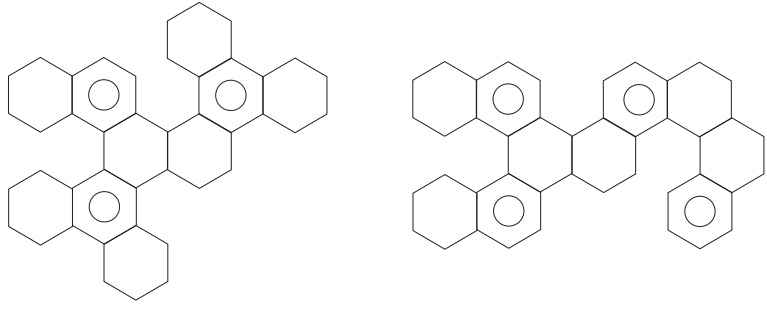}
\caption{All-kink catahexes with the minimum forcing numbers 3 and 4}\label{cata}
\end{figure}

According to the above construction,  $S$ is the set of all $M_0$-alternating hexagons of $H$. Hence $f(H)=h(M_0)\geq fries(H)$. On the other hand, for any perfect matching $M$ of $H$,  $c(M)\leq fries(M)$, and thus $f(H)\leq fries(H)$.
Both inequalities  imply the second equality.
\end{proof}

Beyer et al. \cite{BPHM} observed an algorithm of linear time to compute the independent domination number of a tree. So the minimum forcing number of  all-kink catahexes can be computed in linear time. For example, Fig. \ref{cata} gives the minimum forcing numbers of two all-kink catahexes. But the anti-forcing number of an all-kink catahex may be larger than its minimum forcing number; for example, the triphenylene has the minimum forcing number 1 and the anti-forcing number 2 (see Fig. \ref{spec}(a)).

\subsection{$af(H)=1,2$}

Li \cite{LI} gave the structure of hexagonal systems with an anti-forcing edge (i.e.  an edge that itself forms an anti-forcing set).  For integers $n_1\geq n_2\geq \cdots \geq n_k$, let $H(n_1,n_2,\ldots,n_k)$ be a hexagonal system with $k$ horizontal rows of $n_1\geq n_2\geq \cdots \geq n_k$  hexagons and last hexagon of each row being immediately  below and to the right of the last one in the previous row, and we call it {\em truncated parallelogram} \cite{Cyvin}; For example, See Fig. \ref{para}. In particular, $H(r,r,\ldots,r)$ with $k\geq 2$ and $r\geq 2$ and $H(r)$ with $r\geq 2$ are parallelogram and linear chain respectively. Note that a truncated parallelogram can be placed and represented in other ways.

\begin{figure}
\centering
\includegraphics[scale=0.6]{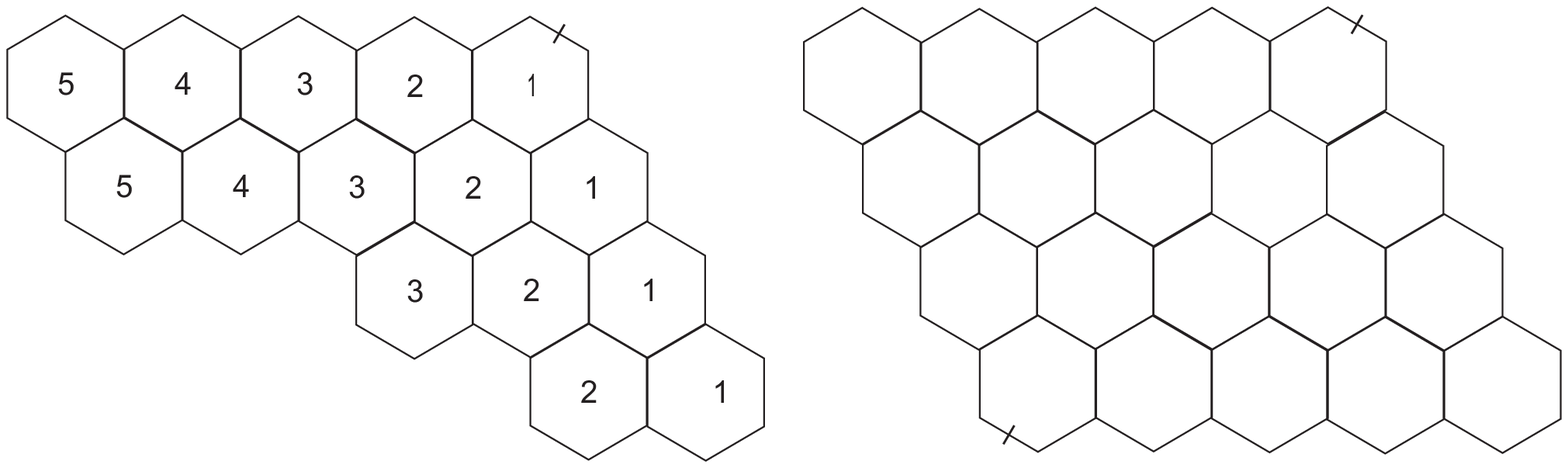}
\caption{Truncated parallelograms $H(5,5,3,2)$ and $H(5,5,5,5)$} with anti-forcing edges marked by  short lines.\label{para}
\end{figure}

\begin{thm}\cite{LI}\label{LI} Let $H$ be a hexagonal system. Then $af(H)=1$ if and only if $H$ is a truncated parallelogram.
\end{thm}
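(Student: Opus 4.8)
The plan is to prove the two implications separately, using throughout Lemma~\ref{anti} (a set $S\subseteq E(H)\setminus M$ is an anti-forcing set of $M$ iff every $M$-alternating cycle of $H$ meets $S$) and Theorem~\ref{minimax} ($af(H,M)=c'(M)$), together with the classical fact that a hexagonal system with a perfect matching always has at least one $M$-alternating hexagon, so that $af(H)\ge 1$. The ``if'' direction I expect to be routine; the ``only if'' direction is the substantial one, and I would run it as an induction on the number of hexagons.

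\emph{Sufficiency.} Let $H=H(n_1,\dots,n_k)$ in standard position. I would write down an explicit perfect matching $M$ and a single boundary edge $e\notin M$ forming an anti-forcing set of $M$, whence $af(H)\le af(H,M)=1$. The right choice of $M$ is the ``staircase'' matching that hugs one corner of $H$ (for a linear chain $H(r)$ it is the matching whose alternating cycles are exactly the boundaries of the initial sub-chains $H(1)\subseteq H(2)\subseteq\cdots$), and $e$ is the boundary edge at the degree-$2$ outer corner opposite that staircase. The point to check is that every $M$-alternating cycle passes through $e$: a cycle avoiding that corner vertex must avoid the unique neighbour it reaches along an $M$-edge, and peeling the corner off one hexagon at a time --- each step again exposing a degree-$2$ vertex whose incident $M$-edge is forced --- shows such a cycle must avoid all of $H$. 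Then $\{e\}$ is an anti-forcing set of $M$ by Lemma~\ref{anti}. (Equivalently, one may induct on $n_1+\cdots+n_k$, deleting the rightmost hexagon of the top row.) Only bookkeeping with the truncated-parallelogram geometry is needed here.

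\emph{Necessity.} Assume $af(H)=1$ with anti-forcing edge $e$, so $H-e$ has a unique perfect matching $M$ and, by Lemma~\ref{anti}, \emph{every} $M$-alternating cycle of $H$ contains $e$; by Theorem~\ref{minimax}, $c'(M)=1$. I would argue by induction on the number of hexagons, the one-hexagon case being immediate. \textbf{Step 1 (locating $e$).} First show that we may take $e$ to be a boundary edge lying at a convex corner of the hexagon $h$ it bounds (ruling out the interior-edge case by a local analysis around the two hexagons sharing $e$); since then $e$ lies in the single hexagon $h$, and every $M$-alternating hexagon is a $6$-cycle through $e$, $h$ is the \emph{only} $M$-alternating hexagon, and a left-to-right sweep identifies $h$ as a terminal hexagon in the truncated-parallelogram sense. \textbf{Step 2 (reduction).} Delete from $H$ the vertices and edges of $h$ not shared with the rest of $H$ to obtain $H'$; check that $H'$ is again a hexagonal system and that the restriction of $M$ to $H'$ is still produced by deleting a single edge, so $af(H')=1$ and the induction hypothesis gives that $H'$ is a truncated parallelogram. \textbf{Step 3 (re-attaching $h$).} Using the position of $e$ from Step~1, show that the only way $h$ can be attached to $H'$ compatibly with ``every $M$-alternating cycle meets $e$'' is the truncated-parallelogram way (extending the top row, or a free end of the staircase), so $H$ is a truncated parallelogram.

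The heart of the matter, and the step I expect to be the main obstacle, is the structural dichotomy implicit in Steps~1 and~3: one must exclude every feature that is \emph{not} present in a truncated parallelogram --- a branching hexagon, a ``fjord''/``cove'', or simply too much width --- by exhibiting, in each such configuration, an $M$-alternating cycle (an alternating hexagon sitting away from $e$, or a long alternating cycle encircling a sub-region) that avoids $e$, contradicting the hypothesis. It may be cleanest to do this bookkeeping in the planar digraph $\vec H\cdot M$ of Theorem~\ref{minimax}, where the hypothesis says precisely that a single arc meets every directed cycle, i.e.\ deleting that arc makes $\vec H\cdot M$ acyclic --- a very rigid condition on a planar digraph --- and then translate this back, face by face, into the claimed combinatorial shape. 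Once all non-truncated-parallelogram local patterns have been ruled out, the induction closes.
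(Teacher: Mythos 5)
The first thing to say is that the paper offers no proof of this statement: it is imported verbatim from Li \cite{LI}, so the only internal benchmark is the technique the authors deploy for the companion characterization of $af(H)=2$ (Theorem \ref{af2}), namely deleting the candidate edge, repeatedly stripping forced pendant edges (the operation $H\ominus e$), and invoking Lemma \ref{unique}. Your sufficiency argument is routine and fine in outline (though the corner must be chosen correctly: for a horizontal linear chain the terminal \emph{vertical} edge is not anti-forcing, only the terminal slanted ones are, so the ``opposite corner'' bookkeeping is not entirely free). Your Step 1 is also recoverable, and more cleanly than by ``local analysis'': $H-e$ has a unique perfect matching, hence by Lemma \ref{unique} two pendant vertices, hence both ends of $e$ have degree $2$ in $H$; so $e$ is a convex boundary edge lying in a single hexagon $h$, which is then the unique $M$-alternating hexagon.

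The genuine gap is Step 2, and it is not a detail but a dead end. The reduction ``delete the private part of $h$ to get a smaller hexagonal system $H'$ with $af(H')=1$, then induct'' cannot work. First, $M$ generally does not restrict to a perfect matching of $H'$: already for $H(2)$ with $e$ a terminal slanted edge of $h=h_1$, two private vertices of $h_1$ are matched by $M$ across the shared edge into $h_2$, so $M\cap E(H')$ misses two vertices of $H'$. Second, and more fundamentally, whenever $M'=M\cap E(H')$ \emph{is} a perfect matching of $H'$, every $M'$-alternating cycle of $H'$ is an $M$-alternating cycle of $H$ avoiding $e$, and by Lemma \ref{anti} there are none; so $H'$ would have a unique perfect matching, which Lemma \ref{unique} forbids for any nonempty hexagonal system (minimum degree $2$). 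Hence the graph left after removing $h$ is \emph{never} again a hexagonal system with anti-forcing number $1$: your induction has no smaller instance of the same statement to recurse on, and the whole burden falls back on the unexecuted ``structural dichotomy'' paragraph, which is precisely the content of the theorem. The workable route is the one the paper itself uses for $af(H)=2$: after deleting $e$, the forced pendant edges must propagate until all of $H$ is consumed (i.e.\ $H\ominus e=\emptyset$), and tracking the successive rows $H_0,H_1,\dots$ swept out by this cascade --- each a linear chain no longer than its predecessor, with Lemma \ref{constant} ruling out overhangs --- produces the truncated-parallelogram shape directly, with no induction on the number of hexagons.
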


Precisely, a single hexagon has six anti-forcing edges, a linear chain has four anti-forcing edges, and a parallelogram has two anti-forcing edges. A true truncated parallelogram has just one anti-forcing edge (see Fig. \ref{para}). In the following we will give a construction for  hexagonal systems with the anti-forcing number 2.

Some necessary preliminary is needed. Let $G$ be a connected plane bipartite graph. An edge of $G$ is said to be {\em fixed single} (resp. {\em double}) if it belongs to no (resp. all ) perfect matchings of $G$. $G$  is {\em normal} or {\em elementary} if $G$ has no fixed single edges. The non-fixed edges of $G$ form a subgraph whose components are normal  and thus 2-connected graphs, which are  called {\em normal components} of $G$. Further, a normal component of $G$ is called a {\em normal block} if it is formed by a cycle of $G$ with its interior. A  pendant vertex of a graph is a vertex of  degree one, and its incident edge is a pendant  edge.

\begin{lemma}\cite{LP}\label{unique} If a bipartite graph has a unique perfect matching, then it has a pair of pendant vertices with different colors.
\end{lemma}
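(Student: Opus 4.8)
The plan is to reuse the same orientation device that appears in the proof of Theorem~\ref{minimax}. Let $G$ be bipartite with colour classes of white and black vertices, and let $M$ be its unique perfect matching. Since any $M$-alternating cycle $C$ would produce a second perfect matching $M\oplus C$, the graph $G$ has no $M$-alternating cycle. I would then orient $G$ exactly as before: direct every edge of $M$ from its white end to its black end, and every edge not in $M$ from its black end to its white end, obtaining a digraph $\vec G$. Traversing any $M$-alternating cycle in a suitable direction gives a directed cycle of $\vec G$, and every directed cycle of $\vec G$ alternates between $M$- and non-$M$-edges; hence $\vec G$ is acyclic. Note $\vec G$ is nonempty because $M\neq\emptyset$ (assuming $G$ has a vertex), so $\vec G$ has at least one source (in-degree $0$) and at least one sink (out-degree $0$).

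Now I would analyse a sink $v$ of $\vec G$. The edge of $M$ incident with $v$ is oriented into $v$ only if $v$ is black, so $v$ must be black. Every edge at $v$ not in $M$ is oriented from the black end $v$ to a white end, i.e.\ out of $v$; since $v$ is a sink there is no such edge, and therefore $\deg_G(v)=1$, so $v$ is a black pendant vertex. Symmetrically, a source $u$ of $\vec G$ must be white: if $u$ were black its $M$-edge would enter $u$, contradicting in-degree $0$. Every non-$M$-edge at the white vertex $u$ is oriented from a black end into $u$, again contradicting in-degree $0$ unless there is none; hence $\deg_G(u)=1$ and $u$ is a white pendant vertex. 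Since $u$ is white and $v$ is black they are distinct, so $G$ has the required pair of pendant vertices of different colours.

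I do not anticipate any genuine obstacle here; this is a short self-contained argument, and the only points needing a little care are the bookkeeping of the orientation conventions (already fixed in the proof of Theorem~\ref{minimax}) and the observation that $\vec G$ is nonempty so that a source and a sink actually exist. An alternative route would be an induction on $|V(G)|$ together with Kotzig's theorem that a connected graph with a unique perfect matching contains a matching edge that is a bridge, peeling off a pendant edge at each step; but the acyclic-orientation proof above is shorter and avoids invoking external results.
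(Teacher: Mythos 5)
Your argument is correct. One thing to be aware of: the paper does not prove this lemma at all --- it is quoted from Lov\'asz and Plummer \cite{LP} --- so there is no in-paper proof to match; what you have written is a genuine, self-contained replacement for the citation. Your route cleverly reuses the orientation $\vec G(M)$ from the proof of Theorem~\ref{minimax}: with $M$-edges directed white$\to$black and non-$M$-edges black$\to$white, every black vertex has all its in-arcs in $M$ and all its out-arcs outside $M$ (and dually for white vertices), so directed cycles are exactly $M$-alternating cycles; uniqueness of $M$ kills these, the digraph is acyclic, and the endpoints of a maximal directed path give a source and a sink. Your colour analysis of the source and sink is right, and the two vertices are automatically distinct since one is white and one is black. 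The only hypotheses you quietly use are that $G$ is nonempty and that every vertex is covered by $M$ (so each vertex has degree at least one and the maximal path is nontrivial); both are harmless here. The classical proof in \cite{LP} runs instead through the bridge/induction argument you mention as an alternative (a unique-perfect-matching graph has a matching edge that is a pendant edge after repeated peeling); your acyclic-orientation version is shorter, avoids any external structure theorem, and fits naturally with the machinery already set up in Section~2 of the paper.
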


\begin{lemma}\cite{SLZZ}\label{component} Let $H$ be a connected plane bipartite graph with a perfect matching. If all pendant vertices of $G$ are of the same color and lie on the boundary, then $G$ has at least one normal block. If $G$ has a fixed single edge and $\delta(G)\geq 2$, then $G$ has at least two normal blocks.
\end{lemma}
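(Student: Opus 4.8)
The plan is to prove the two statements together, by induction on $|V(G)|$, with Lemma \ref{unique} as the engine. The first observation is that whenever all pendant vertices of $G$ lie in one colour class --- in particular whenever $\delta(G)\ge 2$, since then there are none --- $G$ cannot have a unique perfect matching: if it did, Lemma \ref{unique} would force pendant vertices of both colours. Hence the subgraph $G_{0}\subseteq G$ spanned by the non-fixed edges is nonempty, so $G$ has at least one normal component, and for any perfect matching $M$ there is an $M$-alternating cycle.

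For the first assertion, among all normal components of $G$ I would choose one, $B$, whose outer boundary cycle $C$ encloses no other normal component (finiteness makes such an innermost choice possible; normal components are pairwise vertex-disjoint, so ``enclosed'' is unambiguous). The claim is that $B$ is a normal block, i.e.\ $B=C\cup\mathrm{int}_{G}(C)$. If not, something of $G$ sits strictly inside $C$ yet outside $B$; by the choice of $B$ it cannot belong to another normal component, so the subgraph $X$ formed by everything strictly inside $C$ and outside $B$ consists only of fixed edges and vertices incident only to fixed edges. One then checks that $X$ is attached to $B$ solely through fixed \emph{single} edges and that $X$ carries its own perfect matching built from fixed double edges; from here I would derive a contradiction by rerouting $M$ along an $M$-alternating path through the closed disc bounded by $C$, exhibiting a perfect matching of $G$ that contains one of those edges and contradicting that it is fixed single. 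The colour/boundary hypothesis enters only to guarantee, in degenerate cases, that a normal component exists to start from.

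For the second assertion, assume $G$ has a fixed single edge $e$ and $\delta(G)\ge 2$. Then the hypothesis of the first assertion holds vacuously, so $G$ has a normal block $B_{1}$, and $B_{1}\ne G$ since $e\notin E(B_{1})$. Deleting $V(B_{1})$ leaves a plane bipartite graph on which $M$ still restricts to a perfect matching --- a vertex outside $B_{1}$ cannot be $M$-matched into $V(B_{1})$, because every edge leaving $V(B_{1})$ is fixed single, hence not in $M$ --- and one of its components, $K$, meets $C_{1}=\partial B_{1}$ along a fixed single edge of $G$. All pendant vertices of $K$ arise from edges deleted at $V(B_{1})$, so they lie on the boundary of $K$; after arranging (or inducting) so that they are monochromatic, the first assertion applied to $K$ yields a normal block of $K$, which is a normal component of $G$ lying outside $C_{1}$ and therefore distinct from $B_{1}$. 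That gives the second normal block.

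The step I expect to be the real obstacle is the claim, inside the first assertion, that the innermost normal component $B$ equals $C\cup\mathrm{int}_{G}(C)$ --- equivalently, that no ``pocket'' $X$ of purely fixed structure can hide inside $C$. Ruling this out requires controlling precisely how the perfect matchings of $G$ restrict to the closed disc bounded by $C$, and this is where planarity is genuinely used; the companion point for the induction, that a normal block of the subgraph $K$ survives as a normal block of $G$ (its edges staying non-fixed in $G$ and its outer cycle still enclosing nothing new), must be checked with the same care.
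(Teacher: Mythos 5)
The paper does not actually prove Lemma~\ref{component}; it is quoted from \cite{SLZZ}. So the comparison is with the statement itself, and judged on its own your proposal has a genuine gap at exactly the step you flag --- and the gap is worse than you suggest, because your guiding claim about the hypotheses is wrong. You assert that the ``same colour, on the boundary'' condition enters only to guarantee that a normal component exists, and you plan to prove that an innermost normal component $B$ fills the disc bounded by its outer cycle $C$ by a rerouting argument that never invokes that condition. But that claim is false without the condition: take a hexagon $v_1v_2\cdots v_6$ and attach, \emph{inside} it, a path $v_1\,a\,b$ of length two. Then $ab$ is fixed double, $v_1a$ is fixed single, the unique (hence innermost) normal component is the $6$-cycle, and it is not a normal block because $a$ and $b$ lie in its interior. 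No perfect matching contains $v_1a$ (the pendant vertex $b$ forces $ab$), so the contradiction you hope to extract --- ``exhibiting a perfect matching of $G$ that contains one of those edges'' --- simply is not available. The only thing excluding this configuration under the lemma's hypotheses is that $b$ fails to lie on the outer boundary; that is, the hypothesis you set aside is precisely what must drive the key step, and your sketch offers no mechanism for using it.

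The induction for the second assertion has two further unresolved problems. First, the new pendant vertices of a component $K$ of $G-V(B_1)$ are those vertices that lost all but one neighbour to $V(B_1)$; since they attach to vertices of both colours on the outer cycle of $B_1$, they need not be monochromatic, and ``arranging (or inducting) so that they are monochromatic'' is not a step but the missing argument. Second, if $K$ winds around $B_1$, the face of $K$ containing $B_1$ is an \emph{interior} face of $K$, so those pendant vertices do not lie on the outer boundary of $K$ in the inherited embedding; re-embedding $K$ to fix this changes which cycles bound which interiors, so a normal block of the re-embedded $K$ need not be a normal block of $G$. Both points must be settled before the first assertion can legitimately be applied to $K$. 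As it stands, the proposal is a plausible outline whose two load-bearing steps are exactly the ones left unproved.
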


The following result was first pointed out by Sachs and can be extended to bipartite graphs.

\begin{lemma}\cite{Sachs}\label{constant} Let $H$ be a hexagonal system with a perfect
matching. Let $E=\{e_1, e_2, \ldots, e_r\}$ be a set of parallel edges of $H$ such that
$e_i$ and $e_{i+1}$  belong to the same hexagon and the $e_1$ and $e_r$ are  boundary edges. Then $E$ is an edge-cut of $H$ and $|E\cap M|$ is invariant for all perfect matchings $M$ of $H$.
\end{lemma}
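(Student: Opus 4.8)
The plan is to prove the two assertions separately: that $E$ disconnects $H$, and that the number of $M$-edges lying in $E$ is pinned down by the bipartition. I would set things up geometrically first. Place $H$ in the plane with the given orientation; since $e_i$ and $e_{i+1}$ are parallel edges of a common hexagon $h_i$, they are opposite edges of $h_i$, so the hexagons $h_1,\dots,h_{r-1}$ are successive translates of one another along one of the three lattice directions, and the midpoints of $e_1,\dots,e_r$ together with the centers of $h_1,\dots,h_{r-1}$ lie on a common straight line $\ell$. Let $P$ be the sub-arc of $\ell$ running from just beyond $e_1$ to just beyond $e_r$ — this makes sense into the outer face since $e_1$ and $e_r$ are boundary edges — so that $P$ meets $H$ exactly in the relative interiors of $e_1,\dots,e_r$ and in no vertex. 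Closing $P$ up by an arc through the outer face that stays strictly outside $H$, I obtain a Jordan curve; by the Jordan curve theorem $V(H)$ splits into the vertices inside and those outside, and any edge of $H-E$ joining the two sides would have to cross this Jordan curve, hence cross $P$, hence be one of the $e_i$, a contradiction. Thus $E$ is an edge-cut with sides $H_1$ (inside) and $H_2$ (outside), and every edge of $E$ has one end in $H_1$ and one in $H_2$.

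Next I would record the relevant bipartition structure. All the $e_i$ lie in one parallel class, and a one-hexagon check shows that opposite edges of a hexagon carry the same colour pattern; hence every edge in the parallel class of $e_1$ has (say) its white endpoint on one fixed side of $\ell$ and its black endpoint on the other — using the stated convention that the peaks are black to fix which is which. Since $\ell$ is straight and transverse to this edge direction, ``one fixed side of $\ell$'' is exactly one of $H_1,H_2$; so after relabelling, every edge of $E$ joins a white vertex of $H_1$ to a black vertex of $H_2$. Write $W_j,B_j$ for the two colour classes of $H_j$.

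For the counting step I would then argue as follows. Given any perfect matching $M$, the edges of $M$ with both ends in $H_1$ form a matching of $H_1$ covering equally many white and black vertices of $H_1$, and the vertices of $H_1$ left uncovered by them are precisely the $H_1$-endpoints of the edges of $M\cap E$ — all of which are white by the previous paragraph. Hence every black vertex of $H_1$ is matched inside $H_1$, and the number of uncovered white vertices of $H_1$ equals on one hand $|M\cap E|$ and on the other hand $|W_1|-|B_1|$; since the latter is independent of $M$, the invariance follows. (A tidier variant, which I may use instead: take two perfect matchings $M,M'$, note that the components of $M\oplus M'$ are vertex-disjoint cycles, orient each such cycle, and use bipartiteness together with the colour-consistency of $E$ to see that along each cycle the $M$-edges lying in $E$ are exactly the crossings from $H_1$ to $H_2$ while the $M'$-edges in $E$ are exactly the crossings from $H_2$ to $H_1$; a closed cycle makes equally many crossings each way, so it meets $E$ in equally many $M$- and $M'$-edges, and summing over all components and adding $|M\cap M'\cap E|$ to both sides gives $|M\cap E|=|M'\cap E|$.)

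The step I expect to need the most care is the middle one: making precise that the cut separates the two colour classes of the endpoint set $V(E)$ in a \emph{uniform} way, i.e.\ that all white endpoints of edges of $E$ land on the same side. This is exactly what upgrades ``the parity of $|M\cap E|$ is fixed'' to ``$|M\cap E|$ itself is fixed'', and it is where the hexagonal-lattice geometry — the parallelism of the $e_i$ together with the straightness of the cut segment — is genuinely used. The planarity argument of the first step is routine but should be written out so that the phrase ``edge-cut'' is justified, and the final counting is then immediate.
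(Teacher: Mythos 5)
Your proof is correct. The paper gives no proof of this lemma --- it is quoted from Sachs --- but your argument (the straight cut segment through the midpoints of the parallel edges, closed up through the outer face to form a Jordan curve, followed by the observation that all edges of $E$ have their white ends on the same side so that $|M\cap E|=|W_1|-|B_1|$) is exactly the standard argument from that reference, so there is nothing to add.
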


\begin{thm}\label{af2}Let $H$ be a hexagonal system with a fixed single edge. Then $af(H)=2$ if and only if $H$ has exactly two normal components, which are both truncated parallelograms.
\end{thm}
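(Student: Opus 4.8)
The plan is to reduce $af(H)$ to a sum over the normal components of $H$ and then apply Li's classification (Theorem~\ref{LI}) together with Lemma~\ref{component}. The first step is the structural decomposition of $H$ induced by its fixed edges. For any perfect matching $M$ of $H$, a fixed single or fixed double edge lies on no $M$-alternating cycle $C$ (otherwise $M\oplus C$ would be a perfect matching that omits, resp.\ contains, that edge), so every $M$-alternating cycle of $H$ lies inside a single normal component. Moreover, a vertex incident with a fixed double edge has all its other edges fixed single and hence is isolated in the subgraph spanned by the non-fixed edges; thus the normal components $D_1,\dots,D_t$ are pairwise vertex-disjoint, each is a $2$-connected elementary graph, and the perfect matchings of $H$ are exactly the unions of all fixed double edges with one arbitrarily and independently chosen perfect matching of each $D_i$. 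Feeding this into Lemma~\ref{anti} one gets $af(H,M)=\sum_{i=1}^{t}af\bigl(D_i,\,M\cap E(D_i)\bigr)$ for every $M$, and then, taking the minimum over $M$ and using the independence of the choices on the $D_i$, $af(H)=\sum_{i=1}^{t}af(D_i)$.

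The second step is the lower bound and the form of the extremal case. Each $D_i$ is $2$-connected and has a perfect matching, hence has at least two perfect matchings, so $af(D_i)\ge1$. Since $H$ is $2$-connected (so $\delta(H)\ge2$) and has a fixed single edge, Lemma~\ref{component} guarantees that $H$ has at least two normal blocks, and in particular $t\ge2$. Therefore $af(H)=\sum_i af(D_i)\ge t\ge2$, with $af(H)=2$ if and only if $t=2$ and $af(D_1)=af(D_2)=1$.

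The third step identifies the two components in the extremal case. A normal block is by definition a normal component that is a cycle of $H$ together with its interior, hence is itself a hexagonal system. When $t=2$, the ``at least two normal blocks'' supplied by Lemma~\ref{component} must be exactly $D_1$ and $D_2$, so both are hexagonal systems. Now Theorem~\ref{LI} applies componentwise: $af(D_i)=1$ if and only if $D_i$ is a truncated parallelogram. Combining with the second step, $af(H)=2$ if and only if $H$ has exactly two normal components, both of which are truncated parallelograms. The converse direction then follows by reading the same chain backwards: if $H$ has exactly two normal components and each is a truncated parallelogram, then $af(D_1)=af(D_2)=1$ by Theorem~\ref{LI}, so $af(H)=1+1=2$ (such an $H$ automatically has a fixed single edge, since an elementary hexagonal system is its own unique normal component).

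I expect the main obstacle to be the decomposition in the first step, specifically a clean verification that the perfect matchings of $H$ split as the fixed double edges together with independently chosen perfect matchings of the normal components, so that $\min_M af(H,M)$ really distributes over the sum $\sum_i af(D_i)$. This is part of the standard structure theory of plane bipartite (elementary) graphs but must be invoked or argued with some care. The conceptually new ingredient, by contrast, is light: it is the observation that ``exactly two normal components'' upgrades, via Lemma~\ref{component}, to ``both components are normal blocks'', which is precisely what makes Li's theorem applicable to each piece.
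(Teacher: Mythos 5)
Your proposal is correct and follows essentially the same route as the paper: decompose $H$ into its normal components, use additivity of the anti-forcing number over components, invoke Lemma~\ref{component} to get at least two normal blocks, and apply Theorem~\ref{LI} to each. You merely supply more detail than the paper does for the additivity identity $af(H)=\sum_i af(D_i)$ (which the paper asserts without proof) and for the observation that when there are exactly two components they must coincide with the two normal blocks and hence be hexagonal systems.
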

\begin{proof}By Lemma \ref{component}  $H$ has at least two normal components. Such normal component is a hexagonal system with the anti-forcing number at least one. Note that the anti-forcing number of $H$ equals the sum of the anti-forcing numbers of such normal components. Hence $af(H)=2$ if and only if $H$ has exactly two normal components, which are truncated parallelograms by Theorem \ref{LI}.
\end{proof}

\begin{thm}\label{af2}Let $H$ be a normal hexagonal system. Then $af(H)=2$ if and only if $H$  is not truncated parallelogram and $H$ can be obtained by gluing two truncated parallelograms $T_1$ and $T_2$ along their boundary parts as a fused path $P$ of odd length such that\\
(i) an anti-forcing edge of $T_1$ remains on the boundary,\\
(ii) the hexagons of each $T_i$ with an edge of $P$ form a linear chain or a chain with one kink (i.e. the inner dual is a path with exactly one turning vertex), and\\
(iii) when the fused path $P$ passes through edge $b$ (or $a$) of $T_1$, the hexagons of $T_1$ (resp. $T_2$) with an edge of $P$ form a linear chain that is the last (or first ) row of $T_1$ (resp. a chain with one kink). (see Fig. \ref{construction})
\end{thm}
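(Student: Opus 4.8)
The plan is to prove the equivalence in two directions, with Theorem~\ref{LI} and the additivity of the anti-forcing number over the normal components of a hexagonal system (noted in the proof of the preceding theorem) as the main tools. First I would isolate the following reduction. Let $G$ be a hexagonal system with a perfect matching $M$ and $e\notin M$; then $M$ is still a perfect matching of $G-e$, the non-fixed edges of $G-e$ form normal blocks $B_1,\dots,B_t$ (each a sub-hexagonal system of $G$), every $M$-alternating cycle of $G-e$ lies in some $B_i$, and by Lemma~\ref{anti} the least size of an anti-forcing set of $M$ in $G-e$ equals $\sum_i af(B_i)$. Applying this with $G=H$ and $e=e_i$, where $\{e_1,e_2\}$ is a minimum anti-forcing set of a perfect matching $M$ with $af(H,M)=af(H)=2$: since $\{e_{3-i}\}$ is then a size-one anti-forcing set of $M$ in $H-e_i$, and each normal block (being $2$-connected and elementary, so having at least two perfect matchings) carries an $M$-alternating cycle, $H-e_i$ has exactly one normal block $B_i$ (at least one, else $\{e_i\}$ alone would anti-force $M$), with $af(B_i)=1$; hence $B_i$ is a truncated parallelogram by Theorem~\ref{LI}, and $e_{3-i}$ is one of its anti-forcing edges.

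\medskip\noindent\emph{Necessity.} Assume $af(H)=2$; then $H$ is not a truncated parallelogram by Theorem~\ref{LI}. By the reduction, $H-e_1$ has a unique normal block $T_2$, a truncated parallelogram, with $H$ consisting of $T_2$ together with the fixed edges of $H-e_1$ and the edge $e_1$; likewise $H-e_2$ has a unique normal block $T_1$ with $e_1$ an anti-forcing edge of $T_1$, which one checks must lie on $\partial H$, giving (i). Deleting $e_i$ triggers a cascade of forced edges from the degree-one vertices it creates, and Lemma~\ref{constant}, applied to the parallel edge-cuts crossing the fixed region, pins $M$ down on every hexagon the cascade meets. Comparing the two blocks $T_1$ and $T_2$, I would show that $H$ is precisely $T_1$ and $T_2$ sharing a common boundary path $P$; the parity of the cascade forces $|P|$ odd, and the demand that the cascade neither stall (so that $\{e_1,e_2\}$ really is an anti-forcing set) nor release an extra free edge (which would contradict the normality of $H$, or the minimality of $\{e_1,e_2\}$) forces the hexagons of each $T_i$ meeting $P$ to form a linear chain or a one-kink chain, which is (ii); the special behaviour of the cascade when $P$ runs along the corner edges $a$ or $b$ of $T_1$ then yields the refinement (iii).

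\medskip\noindent\emph{Sufficiency.} Conversely, let $H$ be built as described. Since $H$ is normal, is not a truncated parallelogram, and has at least two perfect matchings, Theorem~\ref{LI} gives $af(H)\ge 2$. For the upper bound I would take $e_1$ to be the anti-forcing edge of $T_1$ lying on $\partial H$ furnished by (i) and look at $H-e_1$. As $e_1$ is an anti-forcing edge of $T_1$, the cascade started by deleting it determines $M$ on every hexagon of $T_1$ not shared with $T_2$; conditions (ii) and (iii) are exactly what lets the cascade cross $P$, since the hexagons of $T_1$ and of $T_2$ meeting $P$, being linear or one-kink chains, have their matchings fixed by alternation from the side already determined. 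Hence the non-fixed part of $H-e_1$ is $T_2$ with that chain of hexagons stripped off, which by (iii) is again a truncated parallelogram (removing the first row of a truncated parallelogram, or the one-kink corner chain in the configuration of (iii), leaves a truncated parallelogram). So $H-e_1$ has a single normal block, a truncated parallelogram, and $af(H-e_1)=1$ by Theorem~\ref{LI} and additivity; taking an anti-forcing edge $e_2$ of that block, $H-\{e_1,e_2\}$ has $M$ as its unique perfect matching, so $af(H)\le 2$, and with $af(H)\ge 2$ we conclude $af(H)=2$.

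\medskip\noindent The cascade bookkeeping is routine. The crux, and the main obstacle, is the analysis along the fused path $P$: showing that the configurations in (ii) and (iii) are the \emph{only} ways to overlap two truncated parallelograms so that a single edge-deletion collapses the whole system to (essentially) the other parallelogram. The corner cases, where $P$ runs along $a$ or $b$ and a full row rather than a one-kink chain must be removed, are the most delicate, and there Lemma~\ref{constant} and Lemma~\ref{component} are needed to exclude the spurious configurations.
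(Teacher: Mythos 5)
Your overall strategy is the same as the paper's: delete one edge $e$ of a minimum anti-forcing set, follow the cascade of forced pendant edges to a pendant-free remainder $H\ominus e$, identify that remainder as a truncated parallelogram via Theorem~\ref{LI}, and then recover the gluing structure. But the proposal has a genuine gap where the actual content of the theorem lives. The entire necessity argument for the shape of the overlap --- that the fused part is a single path, that it has odd length, and that the hexagons of each $T_i$ meeting it form a linear or one-kink chain with the refinement (iii) --- is deferred with ``I would show'' and then explicitly flagged by you as ``the crux, and the main obstacle.'' In the paper this is precisely the part that requires work: a careful geometric setup (the hexagon $s$ containing $e$, the two rays $OA$, $OB$, the linear chains $H_0,H_0'$ and the successive rows $H_i$, $H_j'$), Claim~3 excluding hexagons on both sides simultaneously, and a two-case analysis in which Lemma~\ref{constant} is invoked repeatedly to contradict normality whenever a row fails to terminate correctly. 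Asserting that ``the cascade bookkeeping is routine'' does not substitute for this; nothing in your write-up rules out, for instance, an overlap whose inner dual is a path with two turning vertices. You also take for granted that the normal components of $H-e_i$ are sub-hexagonal systems; the paper has to argue this (Claims~1 and~2), since a priori a normal component of a subgraph of $H$ need not be bounded by hexagonal faces only.

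There is also a concrete error in your sufficiency direction. You claim the cascade started by deleting the anti-forcing edge of $T_1$ ``crosses $P$'' and fixes the matching on the hexagons of $T_2$ meeting $P$, so that the surviving normal block is $T_2$ with its first chain stripped off. This is false: the cascade forces all edges of $T_1$ \emph{except those on} $P$ and then stalls, leaving $T_2$ intact, i.e.\ $H\ominus e=T_2$ (as the paper states). Your version fails already for triphenylene, where $T_2$ is a single hexagon that meets $P$; stripping it off would leave nothing and give $af(H)\le 1$, contradicting $af(\mathrm{Triphenylene})=2$. The correct statement still yields $af(H)\le 2$ by deleting one anti-forcing edge of $T_2$ inside $H\ominus e$, so the sufficiency direction is repairable, but as written it proves the wrong thing. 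Finally, a small point: your ``reduction'' should sum $af\bigl(B_i, M\cap E(B_i)\bigr)$ rather than $af(B_i)$; here the two coincide only because the total equals $1$.
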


\begin{figure}
\centering
\includegraphics[scale=0.6]{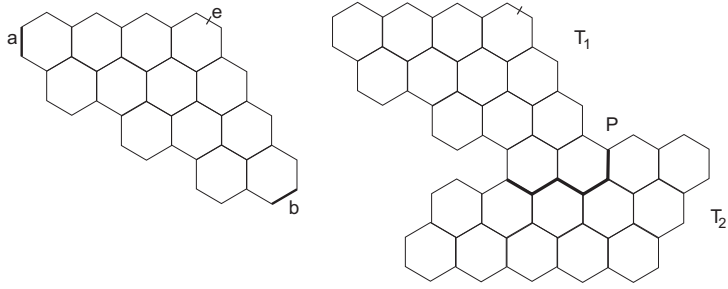}
\caption{(a) A truncated parallelogram with specific edges $a$ and $b$, (b) Gluing two truncated parallelograms.}\label{construction}
\end{figure}

\begin{proof}Suppose that $af(H)=2$. Then  $H$ has  distinct edges $e$ and $e'$ such that $H':=H-e-e'$ has a unique perfect matching $M$. So by  Lemma \ref{unique} $H'$ has two pendant vertices with different colors. Then one of $e$ and $e'$, say $e$,  must be a boundary edge of $H$; otherwise $H'$ has at most one pendant vertex, a contradiction.

\noindent {\bf Claim 1}. $e$ has at least one end with degree two in $H$.

Otherwise, suppose that $e$ has both ends with degree three. Then $H-e$ has the minimum degree two. If $H-e$ is 2-connected, it must be a hexagonal system other than truncated parallelogram, contradicting that $H-e$ has an anti-forcing edge $e'$. If $H-e$ has a cut edge, by Lemma \ref{component} $H-e$ has at least two normal components. So $af(H-e)\geq 2$, also a contradiction, and Claim 1 holds.

So $H-e$ has a pendant vertex $x$. The edge $e_0$ between $x$ and  its neighbor belongs to all perfect matchings of $H-e$, and is thus anti-forced by $e$. Deleting the ends of this edge and incident edges, any pendant  edges of the resulting graph also belong to all perfect matchings of $H-e$, such pendant edges are anti-forced by $e$. Repeating the above process, until to get a graph without pendant vertices, denoted by $H\ominus e$.

\noindent {\bf Claim 2}. $H\ominus e$ is a truncated parallelogram with an anti-forcing edge $e'$.

If $H\ominus e$ is empty, then $e$ is an anti-forcing edge of $H$, a contradiction. Otherwise,  $H\ominus e$ has a perfect matching and  the minimum degree two. Note that the interior faces of $H\ominus e$ are hexagons. By the similar arguments as the proof of Claim 1,  we have that $H\ominus e$ is a hexagonal system with an anti-forcing edge $e'$. Hence Claim 2 holds.

Without loss of generality, suppose that  edge $e$ is from the left-up end $x$ to the right-low end. Then $e_0$ is a slant edge. Let $s$ be the hexagon with edge $e$,  $f_0$  the vertical edge of  $s$ adjacent to $e$, $d_0$ the other edge of $s$ parallel to $e$. From the center $O$ of $s$ draw a ray perpendicular to and away from $f_0$ (resp. $e_0$) intersecting a boundary edge $a$ at $A$ (resp. edge $b$ at $B$) such that $OA$ (resp. $OB$) only passes through hexagons of $H$. Let  $H_0$ and $H_0'$ be the linear chains of $H$ consisting of hexagons intersecting $OA$ and $OB$; See Fig. \ref{Case1}. By the similar reasons as Claim 1, we have the following claim.

\begin{figure}
\centering
\includegraphics[scale=0.65]{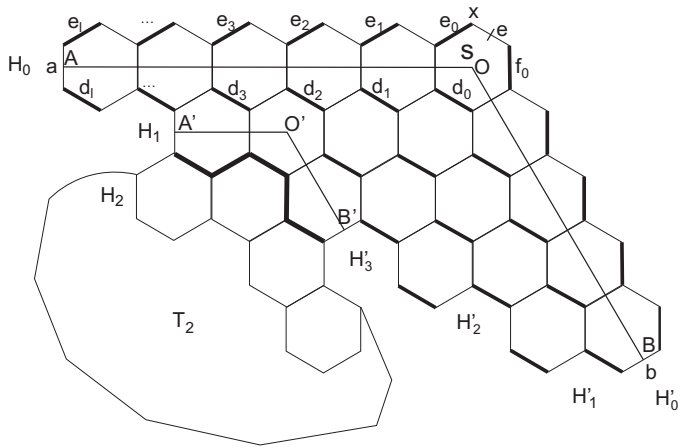}
\caption{Illustration for the proof of Theorem \ref{af2} ($m=1,m'=3$)}\label{Case1}
\end{figure}

\noindent {\bf Claim 3}. It is impossible that $H$ has not only hexagons adjacent above to $H_0$  but also hexagons adjacent right to $H_0'$.

By Claim 3 we may suppose that $H$ has no hexagons adjacent above to $H_0$. Let $e_1,e_2,\ldots, e_l$ denote a series of edges in $H_0$ parallel to $e_0$ and above $OA$, $d_1,d_2,\ldots, d_l$ denote a series of edges in $H_0$ parallel to $d_0$ and below $OA$ (see Fig. \ref{Case1}).  Hence $e_1, e_2,\ldots, e_l$ are anti-forced by $e$ in turn and thus belong to $M$.

Let $H_1$ be the graph consisting of the hexagons adjacent to $H_0$ and below it. If $d_l$ is a boundary edge of $H$, then $d_l,\ldots,d_1,d_0$ are further anti-forced by $e$ and thus belong to $M$. So $H_1$ is a linear chain with an end hexagon in $H_0'$, and thus $H_1$ has at most many hexagons as $H_0$. Otherwise, by Lemma \ref{constant} we have that  some vertical edges in $H_1$ are fixed single edges, contradicting that $H$ is normal. In general, for $m\ge 0$ let $H_{m+1}$ be the graph consisting of the hexagons adjacent to $H_m$ and below it. If $H_{m+1}$ has no hexagon adjacent left to the left end hexagon of $H_m$, by the same reasons as above we have that $H_{m+1}$ is a linear chain with an end hexagon in $H_0'$ and the edges in $H_{m+1}$ parallel to $d_0$ are anti-forced by $e$ and thus belong to $M$. There are two cases to be considered.

\noindent {\bf Case 1}. $H$ has no hexagons adjacent right to $H_0'$.

In this case there must be an integer $m$ such that for each $1\leq i\leq m$, $H_i$ is a linear chain with an end hexagon in $H_0'$ and  $H_i$ has at most many hexagons as $H_{i-1}$, but $H_{m+1}$ has a hexagon adjacent left to the left end hexagon of $H_m$. Otherwise $H$ is a truncated parallelogram, a contradiction. Along chain $H_0'$, similarly as rows $H_i$ we can define $H'_j$ in turn and have the similar fact: there must be an integer $m'$ such that for each $1\leq j\leq m'$, $H_j'$ is a linear chain with an end hexagon in $H_0$ and  $H_j'$ has at most many hexagons as $H_{j-1}'$, but $H_{m'+1}$ has a hexagon adjacent below to the lowest hexagon of $H_{m'}'$ (see Fig. 7).
Then $H_m$ and $H_{m'}'$ have exactly one hexagon $s'$ in common. Let $O'$ be the center of $s'$,  $A'$  the center of the most-left vertical edge of $H_m$ and  $B'$ the center of the lowest right edge of $H'_{m'}$. Hence $T_2:=H\ominus e$ is just a subhexagonal system lying in left-low side of the line $A'O'B'$. Let $T_1$ be the graph consisting of $H_0,\ldots,H_m$ and $H_0',\ldots,H_{m'}'$. It is obvious that $T_1$ is a truncated parallelogram,   $T_1$ and $T_2$ intersect at a path of odd length, and statements (i) and (ii) holds.
\begin{figure}
\centering
\includegraphics[scale=0.65]{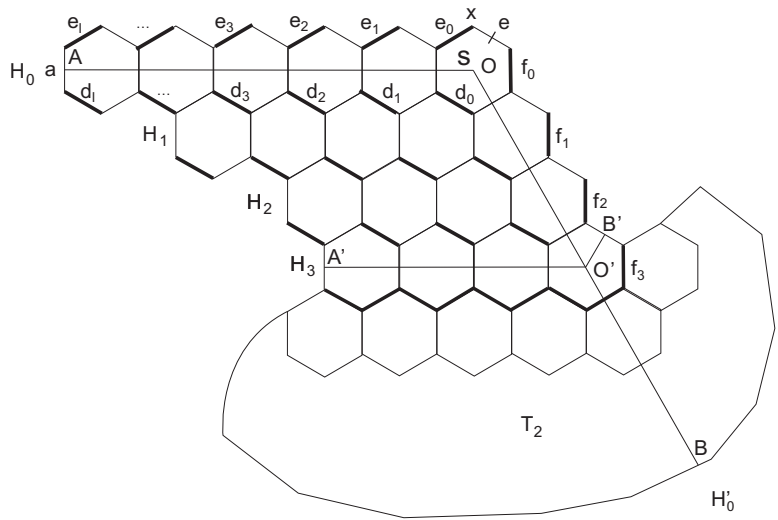}
\caption{Illustration for the proof of Theorem \ref{af2} ($m=3$)}\label{Case2}
\end{figure}

\noindent {\bf Case 2}. $H$ has hexagons adjacent right to $H_0'$.
Let $H_0''$ be the graph consisting of hexagons of $H$ adjacent right to $H_0'$. Let $m$ be the least integer such that $H_{m}$ has the right end hexagon adjacent to a hexagon of $H_0''$. Note that $m$ may be  zero. Let $f_0,f_1,\ldots,f_{m}$ be a series of vertical edges of $H'_0$ on its right side (see Fig. \ref{Case2}). Then the edges $f_0,f_1,\ldots,f_{m-1}$ are anti-forced by $e$ and thus belong to $M$. If every $H_i$ is a linear chain and $H_{i}$ has no hexagons adjacent left to the left-end hexagon of $H_{i-1}$, then $H_0''$ is a linear chain that intersect $H_0'$ at a path of odd length, so $T_2=H\ominus e$ must be a truncated parallelogram consisting of $H_0''$ and its right side.  Otherwise, by analogous arguments we have that for each $1\leq i\leq m$, $H_i$ is a linear chain with an end hexagon in $H_0'$ and  $H_i$ has at most many hexagons as $H_{i-1}$, but $H_{m+1}$ has a hexagon adjacent left to the left end hexagon of $H_m$. Let $s'$ be the right end hexagon of $H_m$,  $O'$  the center of $s'$,  $A'$  the center of the most-left vertical edge of $H_m$ and  $B'$ the center of the  edge of $s'$ adjacent above to $f_m$. Then $T_2:=H\ominus e$ just lies below $A'O'B'$, and $T_1$ consists of $H_0,H_1,\ldots, H_m$ (see Fig. \ref{Case2}).  So  the necessity is proved.

Conversely, suppose that $H$ is obtained from the construction that the theorem states. We can see that the anti-forcing edge $e$ of $T_1$ can anti-forces all double and single edges of $T_1$ except for the path $P$. That is, $H\ominus e=T_2$. Hence $af(H)\leq 2$. Since $H$ is not truncated parallelogram, $af(H)=2$.
\end{proof}

\begin{figure}
\centering
\includegraphics[scale=0.7]{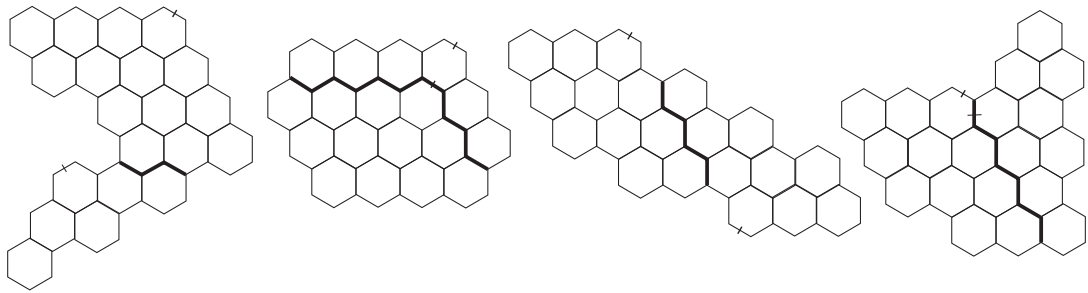}
\caption{Four examples for gluing two truncated parallelograms along a path of odd length (marked by bold line) to get hexagonal systems with the anti-forcing number 2.}\label{examples}
\end{figure}

Finally we give some examples of applying the construction of Theorem \ref{af2} as shown in Fig. \ref{examples}. The last graph has the minimum forcing number one. In fact, Zhang and Li \cite{ZL}, and Hansen and Zheng \cite{Hansen} determined hexagonal systems with a forcing edge. In hexagonal systems $H$ with $af(H)\leq 2$, we can see that in addition to such kind of graphs,    we always have that $af(H)=f(H)$. \\

\noindent{\bf\large Acknowledgments}

\noindent This work is  supported by the  National Natural Science  Foundation of China (Grant No. 11371180), and Institute of Mathematics, Academia Sinica, Taipei.

\end{document}